\definecolor{refkey}{gray}{.75}
\definecolor{labelkey}{gray}{.75}
\newtheorem{theorem}{Theorem}[section]
\newtheorem{definition}{Definition}[section]
\newtheorem{proposition}{Proposition}[section]
\newtheorem{lemma}{Lemma}[section]
\newtheorem{remark}{Remark}[section]
\newcommand{\aiminabs}[1]{\lvert #1 \rvert}
\newcommand{\aiminnorm}[1]{\| #1 \|}
\newcommand{\aimininner}[2]{\langle #1, #2 \rangle}
\numberwithin{equation}{section}
\numberwithin{figure}{section}
\let\oldtocsection=\tocsection
\let\oldtocsubsection=\tocsubsection
\let\oldtocsubsubsection=\tocsubsubsection
\renewcommand{\tocsection}[2]{\hspace{0em}\oldtocsection{#1}{#2}}
\renewcommand{\tocsubsection}[2]{\hspace{2em}\oldtocsubsection{#1}{#2}}
\renewcommand{\tocsubsubsection}[2]{\hspace{4em}\oldtocsubsubsection{#1}{#2}}
\begin{document}
\title{The finite dimensions and determining modes of the global attractor for 2d Boussinesq equations with fractional Laplacian}
\author{Aimin Huang}
\author{Wenru Huo}
\address{The Institute for Scientific Computing and Applied Mathematics, Indiana University, 831 East Third Street, Rawles Hall, Bloomington, Indiana 47405, U.S.A.}
\address{The Department of Mathematics, Indiana University, 831 East Third Street, Rawles Hall, Bloomington, Indiana 47405, U.S.A.}
\email{AH:huangepn@gmail.com}
\email{WH:whuo@imail.iu.edu}

\keywords{Boussinesq system, fractional Laplacian, Hausdorff and fractal dimensions, determining modes}

\date{\today}
\begin{abstract}
In this article, we prove the finite dimensionality of the global attractor and estimate the numbers of the determining modes for the 2D Boussinesq system in a periodic channel with fractional Laplacian in subcritical case.
\end{abstract}

\maketitle

\setcounter{tocdepth}{2}
\tableofcontents
\addtocontents{toc}{~\hfill\textbf{Page}\par}
\section{Introduction}
This paper estimates the number of determining modes and the dimension of the global attractor for the two-dimensional (2D) incompressible Boussinesq equations with subcritical dissipation. The 2D Boussinesq equations read
\begin{equation}\begin{cases}\label{eq1.1.1}
\partial_t\boldsymbol u + \boldsymbol u\cdot \nabla \boldsymbol u + \nu(-\Delta)^\alpha \boldsymbol u=- \nabla \pi + \theta \boldsymbol e_2, \qquad x \in \Omega, \hspace{2pt} t > 0, \\
\nabla \cdot \boldsymbol u=0,  \qquad\qquad\qquad\qquad\qquad\qquad\qquad x \in \Omega, \hspace{2pt} t > 0,\\
\partial_t \theta +\boldsymbol u\cdot \nabla \theta+ \kappa(-\Delta)^{\beta}\theta = f, \qquad\qquad\qquad x \in \Omega, \hspace{2pt} t > 0,
\end{cases}\end{equation}
where $\Omega=[0, 2\pi]^2$ is the periodic domain, $\nu>0$ the fluid viscosity, and $\kappa>0$ the  diffusivity;  $\boldsymbol u =\boldsymbol u(x,t)=(u_1(x,t), u_2(x,t))$ denotes the velocity, $\pi=\pi(x,t)$  the pressure, $\theta= \theta(x,t)$ a scalar function  which may for instance represents the temperature variation in the content of thermal convection, $\boldsymbol e_2=(0,1)$  the unit vector in the vertical direction, and $f=f(x)$ a time-independent forcing term. We associate to \eqref{eq1.1.1} the following initial data
\begin{equation} \label{eq1.1.2}
\boldsymbol u(x,0)=\boldsymbol u_0(x),\quad \theta(x,0)=\theta_0(x), \qquad \qquad x \in \Omega.
\end{equation}
Since in this article we consider 2D Boussinesq equations with a subcritical dissipation, we assume that the exponents $\alpha$ and $\beta$ satisfy
\begin{equation} \label{e1.2}
\alpha,\; \beta \in (\frac{1}{2},1).
\end{equation}
Additionally, along with \cite{HH15}, we also assume that
\begin{equation}\label{cond1}
s_1> 2\mathrm{max}\{1-\alpha, 1-\beta \}, \qquad s_2 \geq 1,
\end{equation}
and
\begin{equation} \label{cond2}
0 \leq s_2-s_1 < \alpha+\beta.
\end{equation} 
Moreover, integrating \eqref{eq1.1.1} on $\Omega$ and integration by parts yield
\[
\frac{\mathrm{d}}{\mathrm{d}t} \bar{\boldsymbol u} = 
\frac{1}{|\Omega|} \frac{\mathrm{d}}{\mathrm{d}t}  \int_{\Omega}
\boldsymbol u \mathrm{d}x = \bar{\theta} \boldsymbol e_2, \qquad
\frac{\mathrm{d}}{\mathrm{d}t} \bar{\theta} = 
\frac{1}{|\Omega|} \frac{\mathrm{d}}{\mathrm{d}t}  \int_{\Omega}
\theta \mathrm{d}x = \bar{f},
\]
where $\bar{\boldsymbol u}, \, \bar{\theta}, \, \bar{f}$ are the mean of $\boldsymbol u, \, \theta, \, f$ over $\Omega$ respectively; that is 
\[
\bar{\boldsymbol u} \equiv \frac{1}{|\Omega|} \int_{\Omega} \boldsymbol u \mathrm{d}x, \qquad \bar{\theta} \equiv \frac{1}{|\Omega|} \int_{\Omega} \theta \mathrm{d}x, \qquad
\bar{f} \equiv \frac{1}{|\Omega|}  \int_{\Omega} f \mathrm{d}x.
\]
Therefore, with loss of generality, we assume that $\boldsymbol u, \, \theta, \, f$ are all of mean zero. Otherwise, we can replace $\boldsymbol u-\bar{\boldsymbol u}, \, \theta-\bar{\theta}, \, f-\bar{f}$ by $\boldsymbol u, \, \theta, \, f$ respectively.

Recently, the 2D Boussinesq equations and their fractional generalizations have attracted considerable attention due to their physical applications and mathematical chanlleges. When $\alpha=\beta=1$, the system \eqref{eq1.1.1} is then called the standard 2D Boussinesq equations, which  are widely used to model the geophysical flows such as atmospheric fronts and oceanic circulation and also play an important role in the study of Rayleigh-B\'enard convection (c.f. \cite{Ped87}). Flows which travel upwards in the middle atmosphere change because of the changes of atmospheric properties. This anomalous phenomenon can be modeled by using the fractional Laplacian. Moreover, some models with fractional Laplacian such as the surface quasi-geostrophic equations and Boussinesq equation have very significant applications. In the mathematical respect, the global well-posedness, global regularity of the standard 2D Boussinesq system as well as the existence of the global attractor have been widely studied, see for example \cite{FMT87, Wang05, Wang07, YJW14}.

This work is motivated by the \cite{JT15}, where the finite dimensionality of the global attractor for 3D primitive equations has been proved, and it is a natural continuation of \cite{HH15}, where we proved the existence of global attractor of the 2D Boussinesq equations. The aim of this article is twofold. We first prove the finite dimensionality of the global attractor of system \eqref{eq1.1.1} by showing that the strong solutions of \eqref{eq1.1.1} on the global attractor satisfying the 
Ladyzhenskaya squeezing property. The second goal is to improve the estimates for the number of determining modes of the global attractor for the system \eqref{eq1.1.1}. Moreover, we prove that there is a finite number $m$, such that each trajectory $(\theta(t), \boldsymbol u(t))$ of strong solutions on the global attractor is uniquely determined by the its projection $P_m(\theta(t), \boldsymbol u(t))$ onto the space generated by $\{\omega_1, \cdots, \omega_m \}$, which are the first $m$ eigenfunctions of the operator $\Lambda$. 

The roadmap of this article is as follows. In Section \ref{sec2}, we introduce the notations, some preliminary results, state our main results, as well as the results from \cite{HH15} about the existence of the global attractor in the certain Sobolev space.  Section \ref{sec3} is devoted to proving that the global attractor $\mathcal{A}$ has finite Hausdorff and fractal dimensions. In Section \ref{sec4}, we prove the existence of the absorbing ball 
in $H^{2\beta} \times H^{2\alpha}$ in subsection \ref{sec4.2}, and that there are a finite number of determining modes on the global attractor in subsection \ref{sec4.3}.

\section{Notations and preliminaries} \label{sec2}
\subsection{Notations and function spaces}
Here and throughout this article, we will not distinguish the notations for vector and scalar function spaces whenever they are self-evident from the context. 
Let $L^{p}(\Omega)$ $(1 \leq p \leq \infty)$ be the classical Lebesgue space with norm $|| \cdot ||_{L^{p}}$ and $\mathcal{C}([0,T]; X)$ be the space of all continuous functions from the interval $[0,T]$ to some normed space $X$. We denote by $L^{p}(0,T; X)$ $(1 \leq p \leq \infty)$ the space of all measurable functions $u : [0,T] \rightarrow X$ with the norm
\[
\aiminnorm{u}^p_{L^{p}(0,T;X)}= \int_0^T \aiminnorm{u}^p_{X} {\mathrm{d}t},
\qquad \aiminnorm{u}_{L^{\infty}(0,T;X)}=  \mathrm{ess}\sup\limits_{t \in [0,T]}
\aiminnorm{u}_{X}.
\]
For $f \in L^1(\Omega)$ and $k = (k_1, k_2) \in \mathbb{Z}^2$,
the Fourier coefficient $\hat{f}(k)$ of $f$ is defined as
$$\widehat{f}(k)= \frac{1}{(2\pi)^2} \int_{\Omega} f(x)e^{-ik \cdot x} {\mathrm{d}x}. $$
We denote the square root of the Laplacian $(-\Delta)^{\frac{1}{2}}$ by $\Lambda$ and we have
$$\widehat{\Lambda f}(k)= |k| \widehat{f}(k),$$
where $\aiminabs{k}=\sqrt{k_1^2+k_2^2}$. More generally, for
$s \in \mathbb{R}$, the fractional Laplacian $\Lambda^{s}f$ can be defined by the Fourier series
\[
\Lambda^{s}f := \sum_{k \in \mathbb{Z}^2} \aiminabs{k}^{s} \widehat{f}(k)e^{ik \cdot x}.
\]
We denote by $H^{s}(\Omega)$ the space of all the functions $f$
of mean zero with $\aiminnorm{f}_{H^s} < \infty$ where the norm $\aiminnorm{\cdot}_{H^s}$ is defined as 
$$\aiminnorm{f}_{H^s}^2= \aiminnorm{\Lambda^s f}_{L^2}^2 = \sum_{k \in \mathbb{Z}^2}
|k|^{2s} |\widehat{f}(k)|^2.$$
For $1 \leq p \leq \infty$ and $s \in \mathbb{R}$, the space $H^{s,p}(\Omega)$ consists of the functions $f$
such that $f= \Lambda^{-s} g$ for some $g \in L^p(\Omega)$. The $H^{s,p}$-norm of $f$ is defined by
\[
\aiminnorm{f}_{H^{s,p}} = \aiminnorm{\Lambda^{s} f}_{L^{p}}.
\]
By the classic spectral theory of compact operators, we denote by $\{\lambda_j \}_{j=1}^{\infty} (0<\lambda_1=1 \leq \lambda_2 \leq\lambda_3 \leq \cdots) $
the eigenvalues of the operator $\Lambda$, which are repeated according to their multiplicities, arranged in the non-decreasing order corresponding to the eigenfunctions $\{\omega_j \}_{j=1}^{\infty}$.
For the sake of simplicity, we use $\aiminnorm{\cdot}$ to stand for the $L^2$-norm and write $L^p$, $H^s$, and $H^{s,p}$ to stand for the space $L^p(\Omega)$, $H^s(\Omega)$ and $H^{s,p}(\Omega)$ respectively for $1 \leq p \leq \infty$ and $s \in \mathbb{R}$.
\begin{remark}
Since the first eigenvalue $\lambda_1$ of the operator $\Lambda$ is 1, we could deduce that the constant in Poincar\'e inequality is also 1, that is if $s_1 \leq s_2$, then
\[
\aiminnorm{\Lambda^{s_1} g} \leq \aiminnorm{\Lambda^{s_2} g}, \qquad \forall g \in H^{s_2}.
\] 
\end{remark}
\subsection{Some preliminary results}
We first recall the sharp fractional Sobolev inequality. See \cite{JN14}.
\begin{lemma}[The Sobolev inequality]\label{lemma2.0}
For $0 < s <1$ and $p=\frac{2}{1-s}$, we have
\[
\left(\int_{\Omega} |u(x)|^p \mathrm{d}x \right)^{\frac{2}{p}} \leq C_{s}\aiminnorm{u(x)}^2_{H^s}, \qquad \text{for all $u \in H^s(\Omega).$}
\]
where the best constant $C_s$ is given by 
$C_s= \frac{\Gamma(1-s)}{(4\pi)^s(\pi)^{\frac{s}{2}}\Gamma(1+s)}$.
\end{lemma}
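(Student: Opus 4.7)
The plan is to reduce the sharp fractional Sobolev inequality on the torus to its classical analogue on $\mathbb{R}^2$ (Lieb's sharp inequality), which in turn is obtained by duality from the sharp Hardy--Littlewood--Sobolev inequality. The whole-plane constant for $n=2$, $0<s<1$, and $p=2/(1-s)$ evaluates to exactly $C_s = \Gamma(1-s)/\bigl((4\pi)^s \pi^{s/2}\,\Gamma(1+s)\bigr)$ through the standard Gamma-function identities used in computing the Riesz-potential normalization. This is the constant announced in the statement, so the torus/$\mathbb{R}^2$ constants coincide, which is a priori surprising and will be the crux of the argument.

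The first concrete step is to rewrite the inequality by duality. Setting $p'=p/(p-1)=2/(1+s)$, one has $\|u\|_{L^p}\leq \sqrt{C_s}\,\|\Lambda^s u\|_{L^2}$ if and only if, for every mean-zero $v\in L^{p'}(\Omega)$,
\[
\|\Lambda^{-s} v\|_{L^2(\Omega)} \;\leq\; \sqrt{C_s}\,\|v\|_{L^{p'}(\Omega)},
\]
which after squaring is the bilinear estimate $\langle v,\Lambda^{-2s} v\rangle \leq C_s\,\|v\|_{L^{p'}}^2$. Expanding $\Lambda^{-2s}$ via its Fourier multiplier (or kernel) representation expresses the left side as a convolution pairing. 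The second step is to identify the kernel of $\Lambda^{-2s}$ acting on mean-zero periodic functions as the $(2\pi)^2$-periodization of the Riesz kernel $\gamma_s |x|^{-(2-2s)}$ on $\mathbb{R}^2$, plus a smoother correction that arises from enforcing mean zero; the normalization $\gamma_s$ is precisely what contributes the Gamma-function ratio in $C_s$. The third step is to apply the sharp HLS inequality of Lieb on $\mathbb{R}^2$ to the Riesz-kernel piece after extending $v$ by zero to a fundamental domain, while handling the smooth correction via the trivial bound that it contributes only lower-order terms.

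The main obstacle will be showing that the whole-space sharp constant is still attained as the sharp constant on the torus — i.e., that the smoother correction to the kernel does not strictly improve the inequality. The standard way to see this is a concentration argument: one tests the inequality against a family $v_\varepsilon(x) = \varepsilon^{-2/p'}\phi(x/\varepsilon)$ obtained from a near-extremizer $\phi$ of the whole-plane inequality (which, on $\mathbb{R}^2$, is explicit in terms of conformal bubbles), compactly supported in a small coordinate chart of $\Omega$. As $\varepsilon\to 0$ the contributions from the smooth correction vanish, while the singular part of the kernel reproduces the whole-plane ratio, so the optimal constant on the torus cannot be smaller than $C_s$. Combined with the upper bound derived in the previous paragraph, this yields sharpness.

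Alternatively, the entire argument can be short-circuited by invoking \cite{JN14}, which records exactly this periodic sharp fractional Sobolev inequality; for the purposes of this paper only the value of $C_s$ and not the sharpness is needed in the subsequent sections, so either route suffices.
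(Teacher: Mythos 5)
The paper offers no proof of this lemma: it is recalled verbatim from the literature with the single pointer ``See \cite{JN14}'', and in Sections 3--4 only the existence of \emph{some} constant is ever used. Your closing paragraph is therefore exactly the paper's treatment, and on that route there is nothing to fault.

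The sketched derivation, however, would not establish the statement as written. (i) The kernel of $\Lambda^{-2s}$ on mean-zero periodic functions, $\sum_{k\neq 0}|k|^{-2s}e^{ik\cdot x}$, is not the literal periodization of the Riesz kernel: the lattice sum $\sum_{k\neq 0}|x+2\pi k|^{2s-2}$ diverges for every $s\in(0,1)$ in two dimensions, so the decomposition into ``Riesz kernel plus smooth correction'' already requires a regularization (Epstein zeta). (ii) More seriously, the bounded correction $h$ contributes $\langle v, h\ast v\rangle\le \|h\|_{L^\infty}\|v\|_{L^1}^2\lesssim\|v\|_{L^{p'}}^2$, which is of the \emph{same} order as the HLS main term, not lower order; since $h$ has no definite sign it cannot be discarded, so this step only yields the inequality with a constant strictly larger than the whole-plane sharp constant. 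Your concentration argument then supplies the matching \emph{lower} bound on the torus constant, so the two bounds never meet and sharpness is not obtained. (iii) Finally, the whole-plane sharp constant in $n=2$, computed from the Riesz normalization $\Gamma(1-s)/(4^{s}\pi\,\Gamma(s))$ and Lieb's HLS constant, is $(4\pi)^{-s}\Gamma(1-s)/\Gamma(1+s)$, which differs from the stated $C_s$ by a factor of $\pi^{s/2}$; the asserted coincidence of the two constants is therefore not something that ``evaluates exactly'' (the discrepancy is entangled with the paper's own inconsistent normalization of $\widehat{f}$ versus $\|\cdot\|_{H^s}$). None of this matters downstream, since the paper uses the inequality only with a generic constant $C$; the honest proof here is the citation, not the sketch.
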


Next, we recall the interpolation inequality and Uniform Gronwall Lemma, which are used frequently in this article. For the proofs of the interpolation inequality and Uniform Gronwall Lemma, one can refer to \cite{Tem88}. 
\begin{lemma}[The interpolation inequality]\label{lemma2.0.1}
For any $s_1\leq s\leq s_2$ and $g \in H^{s_2}$, we have
\[
\aiminnorm{ \Lambda^s g} \leq \aiminnorm{ \Lambda^{s_1} g}^{\delta} \aiminnorm{ \Lambda^{s_2} g}^{1-\delta},
\]
where $s= \delta s_1 +  (1-\delta) s_2$ for some $0 \leq \delta \leq 1$.
\end{lemma}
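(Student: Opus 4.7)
The plan is to pass to the Fourier side and apply Hölder's inequality on counting measure over $\mathbb{Z}^2$. Recall from the definition that
\[
\aiminnorm{\Lambda^s g}^2 = \sum_{k \in \mathbb{Z}^2}|k|^{2s}|\widehat{g}(k)|^2,
\]
and note that the endpoint cases $\delta = 0$ and $\delta = 1$ reduce the claim to an equality, so I only need to handle $0 < \delta < 1$.

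The key algebraic step is to exploit the convex combination $s = \delta s_1 + (1-\delta) s_2$ in order to factor each Fourier summand as
\[
|k|^{2s}|\widehat{g}(k)|^2 = \bigl(|k|^{2s_1}|\widehat{g}(k)|^2\bigr)^{\delta}\bigl(|k|^{2s_2}|\widehat{g}(k)|^2\bigr)^{1-\delta}.
\]
Summing over $k$ and applying Hölder's inequality with conjugate exponents $p = 1/\delta$ and $q = 1/(1-\delta)$ then gives
\[
\sum_{k \in \mathbb{Z}^2}|k|^{2s}|\widehat{g}(k)|^2 \leq \Bigl(\sum_{k}|k|^{2s_1}|\widehat{g}(k)|^2\Bigr)^{\delta}\Bigl(\sum_{k}|k|^{2s_2}|\widehat{g}(k)|^2\Bigr)^{1-\delta},
\]
which is precisely $\aiminnorm{\Lambda^s g}^2 \leq \aiminnorm{\Lambda^{s_1} g}^{2\delta}\aiminnorm{\Lambda^{s_2} g}^{2(1-\delta)}$; taking square roots yields the stated inequality.

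There is no real analytic obstacle here. The finiteness of all three sums follows from the hypothesis $g \in H^{s_2}$ combined with the Poincaré-type bound $\aiminnorm{\Lambda^{s_1} g} \leq \aiminnorm{\Lambda^{s_2} g}$ recorded in the preceding Remark, which in particular ensures $g \in H^{s_1}$ so that the right-hand side is well defined. In short, the entire content of the lemma is Hölder's inequality for $\ell^p(\mathbb{Z}^2)$ translated through Plancherel's identity.
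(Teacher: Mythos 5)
Your proof is correct and is exactly the standard argument the paper defers to \cite{Tem88} for: write the norms spectrally, factor $|k|^{2s}|\widehat{g}(k)|^2$ using the convex combination $s=\delta s_1+(1-\delta)s_2$, and apply H\"older's inequality with exponents $1/\delta$ and $1/(1-\delta)$. The finiteness issue is handled correctly, since the mean-zero convention forces $\widehat{g}(0)=0$ so that $|k|\geq 1$ on the support of the sum and $g\in H^{s_2}$ implies $g\in H^{s_1}$.
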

\begin{lemma}[Uniform Gronwall Lemma]\label{lemma 2.1}
Let $g$, $h$ and $y$ be non-negative locally integrable functions on $(t_0, +\infty)$ such that
$$\frac{{\mathrm{d}y}(t)}{{\mathrm{d}t}} \leq g(t)y(t)+h(t), \qquad \forall t \geq t_0,$$
and
$$\int_{t}^{t+r} g(s) {\mathrm{d}s} \leq a_1, \qquad \int_{t}^{t+r} h(s) {\mathrm{d}s} \leq a_2, \qquad \int_{t}^{t+r} y(s) {\mathrm{d}s} \leq a_3, \qquad  \forall t \geq t_0,$$
where $r, a_1, a_2$ and $a_3$ are positive constants. Then
$$y(t+r) \leq \left(\frac{a_3}{r}+a_2\right)e^{a_1}, \qquad  \forall t \geq t_0.$$
\end{lemma}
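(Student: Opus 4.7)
The plan is to deduce the uniform Gronwall estimate from the classical (pointwise) Gronwall inequality, then average the result over a shift variable to absorb the initial value $y(s)$ using the hypothesis $\int_t^{t+r} y(s)\,\mathrm{d}s\leq a_3$.

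First I would apply the standard Gronwall lemma to the differential inequality $y'(\tau)\leq g(\tau)y(\tau)+h(\tau)$ on the interval $[s,t+r]$, where $s$ is an arbitrary point in $[t,t+r]$. Multiplying by the integrating factor $\exp\bigl(-\int_{t_0}^{\tau} g(\sigma)\,\mathrm{d}\sigma\bigr)$ and integrating from $s$ to $t+r$ gives
\[
y(t+r)\;\leq\; y(s)\exp\!\Bigl(\int_s^{t+r} g(\sigma)\,\mathrm{d}\sigma\Bigr)\;+\;\int_s^{t+r} h(\tau)\exp\!\Bigl(\int_\tau^{t+r} g(\sigma)\,\mathrm{d}\sigma\Bigr)\mathrm{d}\tau.
\]
Since $[s,t+r]\subseteq[t,t+r]$ and the functions are non-negative, the hypotheses $\int_t^{t+r} g\leq a_1$ and $\int_t^{t+r} h\leq a_2$ immediately yield
\[
y(t+r)\;\leq\; y(s)\,e^{a_1}\;+\;a_2\,e^{a_1}\qquad\text{for every }s\in[t,t+r].
\]

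The key move is now to integrate this inequality with respect to $s$ over the interval $[t,t+r]$ and divide by $r$, which turns the unknown $y(s)$ on the right-hand side into the quantity $\tfrac{1}{r}\int_t^{t+r} y(s)\,\mathrm{d}s$. Applying the bound $\int_t^{t+r} y(s)\,\mathrm{d}s\leq a_3$ then gives
\[
y(t+r)\;\leq\;\Bigl(\frac{a_3}{r}+a_2\Bigr)\,e^{a_1},
\]
which is exactly the claimed estimate.

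There is essentially no obstacle here; the only subtlety is technical, namely that the hypothesis only says $y$ is locally integrable and satisfies the differential inequality a.e., so one must interpret $y$ as being equal a.e.\ to an absolutely continuous function (or argue with Lebesgue points) before evaluating $y(t+r)$ pointwise. Once that regularity issue is handled, the estimate follows from the two-step ``pointwise Gronwall, then average in the base point'' argument described above.
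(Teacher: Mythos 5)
Your proof is correct and coincides with the standard argument in the reference the paper cites for this lemma (Temam, \emph{Infinite-Dimensional Dynamical Systems in Mechanics and Physics}): apply the pointwise Gronwall inequality on $[s,t+r]$ for each $s\in[t,t+r]$, then average the resulting bound over $s$ to replace $y(s)$ by $\frac{1}{r}\int_t^{t+r}y(s)\,\mathrm{d}s\leq \frac{a_3}{r}$. The paper itself omits the proof, so there is nothing further to compare.
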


We will use the following Kate-Ponce and commutator inequalities from \cite{KP88}, see also \cite{WU02, Ju05}.
\begin{lemma} \label{lem2.4}
Suppose that $g, h \in C^{\infty}_c(\Omega)$,  then
\begin{equation} \label{eqn2}
\aiminnorm{\Lambda^{s}(g h)} \leq
C(\aiminnorm{\Lambda^{s} g}_{L^{p_1}}\aiminnorm{h}_{L^{p_2}}+
\aiminnorm{\Lambda^{s}h}_{L^{q_1}}\aiminnorm{g}_{L^{q_2}}),
\end{equation}
where $s >0$, $2 \leq p_1, p_2, q_1, q_2 \leq \infty$ and $1/2=1/p_1+1/{p_2}= 1/{q_1}+1/{q_2}$.
\end{lemma}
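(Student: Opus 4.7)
The plan is to prove this Kato--Ponce type inequality via a Littlewood--Paley decomposition together with Bony's paraproduct formula, which is the standard approach to commutator and product estimates involving fractional derivatives on the torus. Let $\{\Delta_j\}_{j \geq -1}$ denote the usual dyadic frequency projectors adapted to the Fourier series on $\Omega = [0,2\pi]^2$, so that $\Delta_j$ is supported in an annulus $\{|k| \sim 2^j\}$, and let $S_j = \sum_{k \leq j-1} \Delta_k$ be the associated low-frequency truncations. Then $\Lambda^s$ behaves, up to bounded factors, like multiplication by $2^{js}$ on $\Delta_j$-blocks, and the Littlewood--Paley square function gives the Plancherel-style identity $\|\Lambda^s f\|_{L^2}^2 \sim \sum_j 2^{2js} \|\Delta_j f\|_{L^2}^2$.

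The first step is to split the product via Bony's decomposition
\[
gh \;=\; T_g h \;+\; T_h g \;+\; R(g,h),
\qquad T_g h := \sum_j S_{j-1}g\, \Delta_j h, \qquad R(g,h) := \sum_j \Delta_j g\, \widetilde{\Delta}_j h,
\]
where $\widetilde{\Delta}_j$ is a slight fattening of $\Delta_j$ in frequency. Each piece will be handled separately. For the paraproduct $T_h g$, the summand $S_{j-1} h \cdot \Delta_j g$ has Fourier support in $\{|k| \sim 2^j\}$, so applying $\Lambda^s$ is equivalent, up to a bounded Fourier multiplier, to applying $\Lambda^s$ only to the high-frequency factor $\Delta_j g$. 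Hölder's inequality with $1/2 = 1/p_1 + 1/p_2$, together with the $L^{p_2}$-boundedness of $S_{j-1}$ and a square-function / Fefferman--Stein maximal argument, then yields
\[
\|\Lambda^s T_h g\|_{L^2} \;\leq\; C\, \|\Lambda^s g\|_{L^{p_1}}\, \|h\|_{L^{p_2}}.
\]
The paraproduct $T_g h$ is bounded symmetrically by $C \|\Lambda^s h\|_{L^{q_1}} \|g\|_{L^{q_2}}$, which accounts for the second term in the inequality.

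The remaining step, and the main technical obstacle, is the high--high interaction $R(g,h)$, where the frequencies of the two factors are comparable and hence cancel out the natural localization used for the paraproducts. The idea is that each summand $\Delta_j g\, \widetilde{\Delta}_j h$ has Fourier support contained in a ball $\{|k| \lesssim 2^{j+O(1)}\}$ rather than in an annulus, so the pieces are no longer almost orthogonal. To recover an $L^2$ bound one inserts a dummy $\Lambda^s \Lambda^{-s}$ and uses Bernstein's inequality together with the strict positivity $s > 0$ to sum the resulting geometric series: writing
\[
\|\Lambda^s R(g,h)\|_{L^2}^2 \;\leq\; \sum_{k} 2^{2ks} \Big\| \Delta_k \!\!\sum_{j \geq k - N}\!\! \Delta_j g\, \widetilde{\Delta}_j h \Big\|_{L^2}^2
\]
and pushing the derivative $2^{ks}$ onto the factor carrying the high frequency, one estimates the inner sum by Hölder's inequality in $L^{p_1} \times L^{p_2}$ (or in $L^{q_1} \times L^{q_2}$) and collapses it via Young's convolution inequality in the $j$-variable, using $s>0$ to guarantee convergence. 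This produces a bound of the form $C \|\Lambda^s g\|_{L^{p_1}} \|h\|_{L^{p_2}}$, after which adding the three pieces finishes the proof.

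I expect the only genuinely delicate point is the summability in the high--high piece: the positivity of $s$ is used there in an essential way, and this is the reason the inequality fails for $s \leq 0$. The restriction $2 \leq p_i, q_i \leq \infty$ with $1/p_1+1/p_2 = 1/q_1+1/q_2 = 1/2$ is exactly what is needed so that Hölder lands the product in $L^2$ and so that the maximal function bound used for the paraproducts is available.
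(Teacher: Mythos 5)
The paper does not actually prove this lemma: it is quoted verbatim as the Kato--Ponce fractional Leibniz rule, with the proof deferred to the cited references \cite{KP88}, \cite{WU02}, \cite{Ju05}. So there is no in-paper argument to compare against; what you have written is a sketch of the standard modern proof via Littlewood--Paley theory and Bony's paraproduct decomposition, which is a legitimate (and arguably cleaner) alternative to the original Kato--Ponce argument, which proceeded instead through Coifman--Meyer-type bilinear multiplier estimates and a commutator expansion. Your three-way split $gh = T_g h + T_h g + R(g,h)$, the observation that $\Lambda^s$ transfers onto the high-frequency factor in each paraproduct, the Fefferman--Stein/maximal-function step to handle general H\"older pairs $(p_1,p_2)$, and the use of $s>0$ via Young's inequality in the dyadic index to sum the high--high remainder are all the right ingredients, and you correctly identify the remainder term as the only place where positivity of $s$ is essential. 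The adaptation to the periodic, mean-zero setting of this paper is routine.

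One point deserves a caveat. The lemma as stated allows the exponent carrying the derivative to be $\infty$ (e.g.\ $p_1=\infty$, $p_2=2$), and in that endpoint case your argument as written breaks down: the Littlewood--Paley square-function characterization $\|(\sum_j 2^{2js}|\Delta_j g|^2)^{1/2}\|_{L^{p_1}} \sim \|\Lambda^s g\|_{L^{p_1}}$, which you need to reassemble the paraproduct after H\"older, fails for $p_1=\infty$, and the $L^\infty$-endpoint fractional Leibniz rule is a genuinely delicate result (Grafakos--Oh, Bourgain--Li) requiring a different argument. For $2\le p_i,q_i<\infty$ your sketch is sound, and this is the only regime in which the present paper ever invokes the lemma, so the gap is harmless for the application; but if you intend to prove the statement exactly as written, you should either exclude the $\infty$ endpoints on the factors carrying $\Lambda^s$ or supply the additional endpoint argument.
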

\begin{lemma} \label{lem2.5}
Suppose that $\boldsymbol g \in (C^{\infty}_c(\Omega))^2$ and $h \in C^{\infty}_c(\Omega)$,  then
\begin{equation} \label{eqn3}
\aiminnorm{\Lambda^{s}(\boldsymbol g \cdot \nabla h)- \boldsymbol g \cdot (\Lambda^{s}\nabla h)} \leq C(\aiminnorm{\nabla \boldsymbol g}_{L^{p_1}}\aiminnorm{\Lambda^{s} h}_{L^{p_2}}+\aiminnorm{\Lambda^{s} \boldsymbol g}_{L^{q_1}}\aiminnorm{\nabla h}_{L^{q_2}}),
\end{equation}
where $s >0$, $2 < p_1, p_2, q_1, q_2 \leq \infty$ and $1/2=1/p_1+1/{p_2}= 1/{q_1}+1/{q_2}$.
\end{lemma}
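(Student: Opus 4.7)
The plan is to follow the original Kato--Ponce strategy via a Littlewood--Paley / paraproduct decomposition, as carried out in \cite{KP88, Ju05}. Let $\{\Delta_j\}_{j \geq -1}$ denote the Littlewood--Paley projectors on the torus, and write Bony's decomposition
\[
\boldsymbol g \cdot \nabla h = T_{\boldsymbol g}(\nabla h) + T_{\nabla h}(\boldsymbol g) + R(\boldsymbol g, \nabla h),
\]
where $T_f v = \sum_{j} S_{j-2}f \cdot \Delta_j v$ is the paraproduct and $R$ is the resonant remainder. The same decomposition is applied to $\boldsymbol g \cdot \Lambda^s \nabla h$, and the commutator is split into three corresponding pieces.

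First I would analyze the low-high paraproduct $\Lambda^s T_{\boldsymbol g}(\nabla h) - T_{\boldsymbol g}(\Lambda^s \nabla h)$. The essential gain is the symbol estimate
\[
\bigl||\xi|^s - |\eta|^s\bigr| \leq C |\xi - \eta|\bigl(|\xi|^{s-1} + |\eta|^{s-1}\bigr),
\]
which, after a Taylor expansion of $\Lambda^s$ at the high frequency of $\Delta_j \nabla h$, converts each block into the form $(\nabla S_{j-2} \boldsymbol g) \cdot (\tilde\Delta_j \Lambda^{s-1} \nabla h)$ plus a convergent error. Applying H\"older with exponents $(p_1,p_2)$ and the vector-valued Fefferman--Stein / Littlewood--Paley square function characterization of $L^p$ (valid for $2<p_1,p_2\le\infty$) bounds the sum by $C\|\nabla \boldsymbol g\|_{L^{p_1}} \|\Lambda^s h\|_{L^{p_2}}$, since $\Lambda^{s-1}\nabla \sim \Lambda^s$.

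The high-low piece $\Lambda^s T_{\nabla h}(\boldsymbol g) - T_{\nabla h}(\Lambda^s \nabla h)$ is handled symmetrically: here $\boldsymbol g$ carries the high frequency so the gain lets us place $\Lambda^s$ on $\boldsymbol g$, producing the bound $C\|\Lambda^s \boldsymbol g\|_{L^{q_1}}\|\nabla h\|_{L^{q_2}}$ via H\"older with $(q_1,q_2)$. For the resonant term $R(\boldsymbol g, \nabla h)$, the two factors have comparable frequencies $\sim 2^j$; writing $\Delta_j \nabla h = 2^j \tilde\Delta_j h$ in the $L^{p_2}$-norm sense and distributing $\Lambda^s$ directly gives both bounds, and one picks whichever is convenient by applying the square function inequality.

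The main obstacle I expect is tracking the dyadic summation carefully to ensure convergence for the full range $2<p_i,q_i\le\infty$, in particular the endpoint case $p_i=\infty$ or $q_i=\infty$, where one must replace the square function bound by a straightforward $\ell^1$ estimate using the almost-orthogonality $\mathrm{supp}(\widehat{\Delta_j f})\subset\{|\xi|\sim 2^j\}$. The restriction $p_i,q_i>2$ (as opposed to $\geq 2$ in Lemma~\ref{lem2.4}) is exactly what this summation forces, because the symbol estimate produces a factor $|\eta|^{s-1}$ rather than $|\eta|^s$, costing one degree of integrability relative to the plain Leibniz rule.
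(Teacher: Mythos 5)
The paper gives no proof of Lemma~\ref{lem2.5} to compare against: it is quoted as a known commutator estimate, with \cite{KP88} (and \cite{WU02, Ju05}) cited for the argument. Your outline is the standard modern route via Bony's decomposition, and its architecture is sound: the only piece requiring genuine cancellation is the low--high paraproduct, where the mean-value bound on $|\xi|^{s}-|\eta|^{s}$ (with $\eta$ the high frequency of $\nabla h$ and $|\xi-\eta|$ the low frequency of $\boldsymbol g$) trades one derivative from $\nabla h$ onto $\boldsymbol g$ and yields $C\|\nabla\boldsymbol g\|_{L^{p_1}}\|\Lambda^{s}h\|_{L^{p_2}}$, while the high--low and resonant pieces need no cancellation because the output frequency is comparable to that of $\boldsymbol g$, so each term is bounded separately by $C\|\Lambda^{s}\boldsymbol g\|_{L^{q_1}}\|\nabla h\|_{L^{q_2}}$ (using $s>0$ to sum the low-frequency output of the resonant blocks, and replacing the square-function bound by an $\ell^{1}$ sum at the $L^{\infty}$ endpoints, as you note). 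Two corrections. First, your second commutator piece should read $\Lambda^{s}T_{\nabla h}(\boldsymbol g)-T_{\Lambda^{s}\nabla h}(\boldsymbol g)$; as written the subtracted term has lost the factor $\boldsymbol g$. Second, your closing heuristic for the restriction $p_i,q_i>2$ is not convincing: the factor $|\eta|^{s-1}$ in the symbol estimate is exactly compensated by the extra factor $|\xi-\eta|$ that becomes $\nabla\boldsymbol g$, so the low--high term has the same scaling as the Leibniz rule and costs no integrability; the strict inequality is better attributed to the endpoint behaviour of the multiplier/square-function machinery (or simply to how the estimate is transcribed from the cited sources) than to the symbol bound. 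Since this is a classical quoted lemma, a sketch at this level of detail is an acceptable substitute for the citation, modulo those two points.
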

\begin{remark}
We remark that the inequalities \eqref{eqn2} and \eqref{eqn3} in Lemmas \ref{lem2.4} and \ref{lem2.5} are also valid for those $g$ (or $\boldsymbol g$) and $h$ belonging to certain Sobolev spaces which make the right-hand sides of \eqref{eqn2} and \eqref{eqn3} finite.
\end{remark}
We now recall the following existence and uniqueness results from \cite{HH15} for the 2D Boussinesq problem.
\begin{theorem} \label{thm2.0}
Let $$\dot{H_0} = \left\{\theta \in L^2: \int_{\Omega} \theta {\mathrm{d}x} =0 \right\},$$ and $$ \dot{H_1} = \left\{\boldsymbol u \in L^2: \nabla \cdot \boldsymbol u =0,
\int_{\Omega} u_1 {\mathrm{d}x} =\int_{\Omega} u_2 {\mathrm{d}x}=0 \right\}.$$ Suppose $f \in H^{-\beta}$ and $(\theta_0, \boldsymbol u_0) \in \dot{H_0} \times \dot{H_1}$. Then, for any $T > 0$, there exists at least one weak solution $(\theta(t), \boldsymbol u(t))$ of the 2D Boussinesq equations \eqref{eq1.1.1} in the sense of distribution.
Moreover, $\theta \in L^{\infty}(0,T; \dot{H_0}) \cap L^{2}(0,T; H^{\beta})$ and $\boldsymbol u \in L^{\infty}(0,T; \dot{H_1}) \cap L^{2}(0,T; H^{\alpha})$.
Furthermore, if we assume that $s_1$, $s_2$ satisfy 
\eqref{cond1} and \eqref{cond2}, 
$(\theta_0, \boldsymbol u_0)\in H^{s_1} \times H^{s_2}$ and $f \in H^{s_1-\beta} \cap L^{p_0}$, where 
\begin{equation} \label{eq2f}
\begin{split}
& r_0=\begin{cases}
s_1, \quad & 2\mathrm{max}\{1-\alpha, 1-\beta\} < s_1 < 1,\\
\text{any number in} \hspace{2pt} (2\mathrm{max} \{1-\alpha, 1-\beta\}, 1),&\qquad s_1\geq 1, \\
\end{cases} \\
& p_0 = \frac{2}{1-r_0}.
\end{split}
\end{equation}
Then for any $T>0$, the Boussinesq system \eqref{eq1.1.1}-\eqref{eq1.1.2} has a unique strong solution $(\boldsymbol u,  \theta)$ satisfying
\begin{equation}\begin{split}
	(\theta, \boldsymbol u) &\in \,\mathcal C([0, T], H^{s_1})
\times	 \mathcal C([0, T], H^{s_2}),\\
	(\theta_t, \boldsymbol u_t) &\in L^2(0, T; H^{s_1-\beta})\times L^2(0, T; H^{s_2-\alpha}).\\
\end{split}\end{equation}
\end{theorem}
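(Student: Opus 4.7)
The plan is to employ a Galerkin approximation scheme. Let $P_n$ denote the projection onto $\mathrm{span}\{\omega_1,\ldots,\omega_n\}$ and construct approximate solutions $(\theta_n,\boldsymbol u_n)$ by solving the resulting ODE system; the Cauchy--Lipschitz theorem guarantees local solvability, and the a priori bounds below extend them globally. For the weak solutions, I would first test the velocity equation with $\boldsymbol u_n$ and the temperature equation with $\theta_n$; the convective terms vanish by the divergence-free condition and integration by parts, leaving the basic energy identity
\[
\frac{1}{2}\frac{d}{dt}\bigl(\aiminnorm{\boldsymbol u_n}^2+\aiminnorm{\theta_n}^2\bigr)+\nu\aiminnorm{\Lambda^{\alpha}\boldsymbol u_n}^2+\kappa\aiminnorm{\Lambda^{\beta}\theta_n}^2 = \aimininner{\theta_n \boldsymbol e_2}{\boldsymbol u_n} + \aimininner{f}{\theta_n}.
\]
Absorbing the right-hand side via Young and Poincar\'e yields uniform bounds in $L^{\infty}(0,T;L^2)\cap L^2(0,T;H^{\alpha})$ for $\boldsymbol u_n$ and in $L^{\infty}(0,T;L^2)\cap L^2(0,T;H^{\beta})$ for $\theta_n$. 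A corresponding bound on $(\partial_t\theta_n,\partial_t\boldsymbol u_n)$ in a suitable negative Sobolev norm, for which the subcritical hypothesis $\alpha,\beta>1/2$ is used to interpret the nonlinearities, then allows Aubin--Lions compactness so that we may pass to the limit in the quadratic terms.

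For the strong solutions, I would apply $\Lambda^{s_1}$ and $\Lambda^{s_2}$ to the temperature and momentum equations and test with $\Lambda^{s_1}\theta$ and $\Lambda^{s_2}\boldsymbol u$. The convective nonlinearity is handled by the commutator Lemma \ref{lem2.5}: writing $\Lambda^{s}(\boldsymbol u\cdot\nabla g)=[\Lambda^{s},\boldsymbol u\cdot\nabla]g+\boldsymbol u\cdot\Lambda^{s}\nabla g$, the second piece pairs to zero against $\Lambda^{s}g$ by $\nabla\cdot\boldsymbol u=0$, while the commutator is estimated through Kato--Ponce (Lemma \ref{lem2.4}) combined with interpolation (Lemma \ref{lemma2.0.1}) and the sharp Sobolev embedding (Lemma \ref{lemma2.0}). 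Condition \eqref{cond1} furnishes exactly the threshold regularity needed for these product and commutator estimates to close, and the choice of $r_0,p_0$ in \eqref{eq2f} aligns with the dual exponent produced by Lemma \ref{lemma2.0} when bounding $\aimininner{f}{\Lambda^{2s_1}\theta}$. The cross-coupling $\aimininner{\Lambda^{s_1}\theta \boldsymbol e_2}{\Lambda^{s_2}\boldsymbol u}$ arising from $s_1\neq s_2$ is treated by shifting derivatives through fractional integration by parts and interpolating, which is where \eqref{cond2} is precisely calibrated: the requirement $s_2-s_1<\alpha+\beta$ is what leaves room to absorb this term into $\tfrac{\nu}{2}\aiminnorm{\Lambda^{s_2+\alpha}\boldsymbol u}^2+\tfrac{\kappa}{2}\aiminnorm{\Lambda^{s_1+\beta}\theta}^2$. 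A Gronwall argument on the resulting inequality then yields the stated regularity, and the bound on $(\theta_t,\boldsymbol u_t)$ follows by reading it off from the equation.

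Uniqueness is obtained by a standard energy argument on the difference $(\Theta,\boldsymbol U):=(\theta^{1}-\theta^{2},\boldsymbol u^{1}-\boldsymbol u^{2})$ of two strong solutions sharing the same data. The difference satisfies a linear system whose nonlinear contribution has the form $\boldsymbol U\cdot\nabla\boldsymbol u^{2}$ and $\boldsymbol U\cdot\nabla\theta^{2}$; an $L^2$ estimate using the $H^{s_1}\times H^{s_2}$ regularity of $(\theta^{2},\boldsymbol u^{2})$ together with Gronwall yields $\Theta\equiv 0$, $\boldsymbol U\equiv 0$ on $[0,T]$.

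The main obstacle I anticipate is the higher-regularity step: the delicate balance encoded in \eqref{cond1}--\eqref{cond2} has to be exploited consistently so that \emph{every} nonlinear contribution (commutator terms, cross-coupling, forcing) fits inside the available fractional dissipation without leaving a residual term that grows super-linearly in the norm being estimated. In particular, choosing the interpolation exponents compatibly for both the $\theta$-equation and the $\boldsymbol u$-equation simultaneously is the delicate part; once this bookkeeping is carried out, the rest of the argument is essentially routine.
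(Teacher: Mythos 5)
First, a point of reference: the paper does not prove Theorem~\ref{thm2.0} at all --- it is quoted verbatim from \cite{HH15}, so there is no in-paper argument to compare against line by line. Judged on its own, your outline follows the standard and surely correct skeleton (Galerkin plus $L^2$ energy estimate and Aubin--Lions for weak solutions; higher-order energy estimates via Kato--Ponce for strong solutions; $L^2$ energy estimate on the difference for uniqueness), but it has one genuine gap in the step you yourself flag as delicate, and the gap is visible in your misreading of the hypothesis $f\in L^{p_0}$.

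The hypothesis $f\in L^{p_0}$ with $p_0=2/(1-r_0)$ as in \eqref{eq2f} is not there to pair $f$ against $\Lambda^{2s_1}\theta$ by duality --- for the forcing term the assumption $f\in H^{s_1-\beta}$ already suffices, since $|\aimininner{f}{\Lambda^{2s_1}\theta}|\le \aiminnorm{\Lambda^{s_1-\beta}f}\,\aiminnorm{\Lambda^{s_1+\beta}\theta}$ absorbs into the dissipation. Its actual role (which one can read off from the bounds of \cite[Sections 3.1, 3.3]{HH15} that the present paper quotes as \eqref{eq4.2.0}) is to run a maximum-principle/$L^p$ estimate on the transport-diffusion equation for $\theta$, giving $\aiminnorm{\theta(t)}_{L^{p}}\le C\aiminnorm{f}_{L^{p}}/\kappa$, which then feeds into the momentum equation to produce intermediate bounds on $\boldsymbol u$ (e.g.\ in $H^1$) before one attempts the $H^{s_1}\times H^{s_2}$ estimate. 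This intermediate ladder is not optional: if you go directly from the $L^2$ energy level to testing with $\Lambda^{2s_1}\theta$ and $\Lambda^{2s_2}\boldsymbol u$, the commutator/Kato--Ponce bounds produce, after interpolation, terms that are superlinear in the very quantity $y(t)=\aiminnorm{\Lambda^{s_1}\theta}^2+\aiminnorm{\Lambda^{s_2}\boldsymbol u}^2$ being estimated (schematically $y'\le C\,y^{p}$ with $p>1$), and Gronwall then only gives local-in-time control, not existence on an arbitrary $[0,T]$. The lower-order norms appearing as coefficients must already be known to be integrable in time from the previous rung of the ladder for the estimate to close linearly. So the missing idea is precisely the $L^{p_0}$ bootstrap for $\theta$ and the consequent intermediate regularity for $\boldsymbol u$; without it the higher-regularity step, as sketched, would fail for large $T$. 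A smaller but related omission: the uniqueness estimate for the difference also needs \eqref{cond1} to make sense of $\aimininner{\boldsymbol U\cdot\nabla\theta^{2}}{\Theta}$ when $s_1<1$ (one must move $\Lambda^{\beta}$ onto $\Theta$ and apply Lemma~\ref{lem2.4} to $\Lambda^{1-\beta}(\boldsymbol U\theta^{2})$, exactly as in \eqref{eq5.8}--\eqref{eq5.9} of Section~\ref{sec4}); calling it ``a standard energy argument'' hides the place where the subcriticality assumption is actually consumed.
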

It was also proved in \cite{HH15} that the 2D Boussinesq system has a finite-dimensional global attractor.
\begin{theorem}[Existence of a global attractor] \label{thm2.1} 
Assume that $\nu >0$, $\kappa >0$, $s_1$, $s_2$ satisfy \eqref{cond1} and \eqref{cond2},  
and $f \in H^{s_1-\beta} \cap L^{p_0}$ where $p_0$ is defined in \eqref{eq2f}. Then the solution operator $\{S(t)\}_{t\geq 0}$ of the 2D Boussinesq system: $S(t)(\theta_0, \boldsymbol u_0)=(\theta(t),\boldsymbol u(t))$ defines a semigroup in the space $H^{s_1} \times H^{s_2}$ for all $t\in\mathbb R_+$. Moreover, the following statements are valid:
	\begin{enumerate}
		\item for any $(\theta_0, \boldsymbol u_0)\in H^{s_1} \times H^{s_2}$, $t\mapsto S(t)(\theta_0, \boldsymbol u_0)$ is a continuous function from $\mathbb R_+$ into $H^{s_1} \times H^{s_2}$;
		\item for any fixed $t>0$, $S(t)$ is a continuous and compact map in $H^{s_1} \times H^{s_2}$;
		\item $\{S(t)\}_{t\geq 0}$ possesses a global attractor $\mathcal A$ in the space $H^{s_1} \times H^{s_2}$. The global attractor $\mathcal A$ is compact and connected in $H^{s_1} \times H^{s_2}$ and is the maximal bounded attractor and the minimal invariant set in $H^{s_1} \times H^{s_2}$ in the sense of the set inclusion relation.		
		\end{enumerate}
\end{theorem}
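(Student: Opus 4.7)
The overall strategy I would use is the classical framework for dissipative evolution equations \cite{Tem88}: verify that $\{S(t)\}_{t\ge 0}$ is a well-defined, continuous, eventually compact, and dissipative semigroup on $H^{s_1}\times H^{s_2}$, and then invoke the abstract global attractor theorem. Item (1) and the semigroup law $S(t+\tau)=S(t)\circ S(\tau)$ follow immediately from the existence, uniqueness, and time-continuity of strong solutions supplied by Theorem \ref{thm2.0}. For the continuity-in-initial-data half of (2), I would subtract two strong solutions, test the difference in $H^{s_1}\times H^{s_2}$, control the nonlinearity using the Kato--Ponce and commutator inequalities (Lemmas \ref{lem2.4}--\ref{lem2.5}) together with the Sobolev (Lemma \ref{lemma2.0}) and interpolation (Lemma \ref{lemma2.0.1}) inequalities, and close with Gronwall to obtain Lipschitz dependence on bounded time intervals.

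Dissipativity, i.e.\ the existence of an absorbing ball in $H^{s_1}\times H^{s_2}$, comes from testing the temperature and velocity equations against $\Lambda^{2s_1}\theta$ and $\Lambda^{2s_2}\boldsymbol u$ respectively, which puts a dissipation of $\kappa\aiminnorm{\theta}_{H^{s_1+\beta}}^2 + \nu\aiminnorm{\boldsymbol u}_{H^{s_2+\alpha}}^2$ on the left. I would then dominate the nonlinear and buoyancy contributions via the same commutator toolkit, the subcriticality \eqref{e1.2} and compatibility conditions \eqref{cond1}--\eqref{cond2} being precisely what is needed to absorb the worst factors into the dissipation. After Young's inequality the outcome is a differential inequality $\tfrac{d}{dt}E + cE \le C\bigl(\aiminnorm{f}_{H^{s_1-\beta}}^2 + \aiminnorm{f}_{L^{p_0}}^2\bigr)$ for $E = \aiminnorm{\theta}_{H^{s_1}}^2+\aiminnorm{\boldsymbol u}_{H^{s_2}}^2$, and Gronwall then provides a fixed ball $B_0$ absorbing every bounded set.

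For compactness of $S(t)$ at each $t>0$ (the remaining half of (2)), I would run a parabolic-smoothing bootstrap: the Uniform Gronwall Lemma \ref{lemma 2.1} applied to a higher-regularity energy estimate should show that trajectories landing in $B_0$ enter a bounded subset of $H^{s_1+\eta}\times H^{s_2+\eta}$ for some $\eta>0$, after which compactness follows from the compact Sobolev embedding on the torus $\Omega$. Once $\{S(t)\}$ is continuous, dissipative, and compact for $t>0$, item (3) follows from the classical theorem: the $\omega$-limit set $\mathcal A = \bigcap_{s\ge 0}\overline{\bigcup_{t\ge s} S(t)B_0}$ is compact, connected, invariant, maximal bounded, and minimal invariant in $H^{s_1}\times H^{s_2}$.

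The principal obstacle is the subcritical regime $\alpha,\beta\in(\tfrac12,1)$: the dissipation is too weak to absorb the full derivative loss of the convective terms by a naive energy estimate, so the commutator inequality \eqref{eqn3} must be invoked with carefully balanced exponents $(p_1,p_2,q_1,q_2)$ so that every factor lands either in the dissipation $\Lambda^{s_1+\beta}\theta,\;\Lambda^{s_2+\alpha}\boldsymbol u$ or in a norm already controlled in $H^{s_1}\times H^{s_2}$. The tightest step is the coupled trilinear term $\aimininner{\boldsymbol u\cdot\nabla\theta}{\Lambda^{2s_1}\theta}$, which binds the two regularity scales and dictates the gap restriction $s_2-s_1<\alpha+\beta$; the higher-regularity bootstrap used for compactness of $S(t)$ confronts the same difficulty and is closed by the same mechanism.
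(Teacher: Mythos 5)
The paper does not actually prove this theorem: it is imported verbatim from \cite{HH15}, so there is no internal proof to compare against. Your outline is the standard dissipative-semigroup route (well-posedness and continuous dependence, absorbing ball, smoothing/compactness, abstract attractor theorem), and it is the strategy the cited reference follows, as one can infer from the fragments of \cite{HH15} reproduced later in this paper: the $L^p$ maximum-principle bounds for $\theta$ and the $H^1$ bound for $\boldsymbol u$ in \eqref{eq4.2.0}, the time-averaged bounds on $\aiminnorm{\Lambda^{\beta}\theta}^2$ and $\aiminnorm{\Lambda^{\alpha}\boldsymbol u}^2$, and the repeated applications of the Uniform Gronwall Lemma in the proof of Proposition~\ref{prop1}.

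One step of your sketch would not survive as written. You claim that testing against $\Lambda^{2s_1}\theta$ and $\Lambda^{2s_2}\boldsymbol u$ and applying Young's inequality yields directly
\[
\frac{d}{dt}E + cE \le C\bigl(\aiminnorm{f}_{H^{s_1-\beta}}^2 + \aiminnorm{f}_{L^{p_0}}^2\bigr),
\qquad E = \aiminnorm{\theta}_{H^{s_1}}^2 + \aiminnorm{\boldsymbol u}_{H^{s_2}}^2 .
\]
This cannot be obtained in one shot: after Kato--Ponce, Sobolev embedding and interpolation, the convective terms leave behind contributions whose coefficients depend on the solution itself (products of the form $\aiminnorm{\Lambda^{s_2+\alpha}\boldsymbol u}^{2\gamma}\aiminnorm{\Lambda^{s_2}\boldsymbol u}^{2(1-\gamma)}$ and the like), and these are only known to be bounded in time average via the dissipation at lower regularity levels, not pointwise in time with an absolute constant. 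The absorbing ball in $H^{s_1}\times H^{s_2}$ is therefore built as a ladder: first the $L^2$ and $L^p$ estimates --- this is where the hypothesis $f\in L^{p_0}$ actually enters, through the maximum principle for $\theta$; your sketch records $\aiminnorm{f}_{L^{p_0}}$ on the right-hand side but never explains its role --- then $H^{\beta}\times H^{\alpha}$, and then the top level, each rung closed with the Uniform Gronwall Lemma using the time-averaged dissipation from the rung below. This is exactly the two-stage structure visible in the proof of Proposition~\ref{prop1} of the present paper. With that correction your plan matches the reference; the remaining items (Lipschitz continuity in the initial data on bounded sets, smoothing into a compactly embedded space for $t>0$, and the abstract attractor theorem) are as you describe.
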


\section{Dimensions of the global attractor} \label{sec3}
The aim of this section is to prove that the global attractor $\mathcal{A}$ in Theorem~\ref{thm2.1} has finite Hausdorff and fractal dimensions and we are going to utilize the Ladyzhenskaya squeezing property to estimate the dimension of the global attract $\mathcal{A}$. An alternative approach is to use Lyapunov exponents to estimate the Hausdorff and fractal dimensions of the global attractor $\mathcal{A}$, see \cite{Tem88} and \cite{Lad91} for details. The main result in this section is the following. 
\begin{theorem} \label{th2}
Under the assumptions of Theorem~\ref{thm2.1}, the global attractor $\mathcal{A}$ has finite Hausdorff and fractal dimensions measured in the $H^{s_1} \times H^{s_2}$ space.
\end{theorem}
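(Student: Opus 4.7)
The plan is to follow the Ladyzhenskaya squeezing route advertised by the authors. Concretely, I would exhibit a time $t^*>0$ and a finite-rank orthogonal projection $P_N$ onto the span of the first $N$ eigenfunctions of $\Lambda$ such that, for every pair of points $U_1,U_2\in\mathcal A$, either
\[
\aiminnorm{P_N(S(t^*)U_1-S(t^*)U_2)}_{H^{s_1}\times H^{s_2}} \ge \aiminnorm{(I-P_N)(S(t^*)U_1-S(t^*)U_2)}_{H^{s_1}\times H^{s_2}},
\]
or else
\[
\aiminnorm{S(t^*)U_1-S(t^*)U_2}_{H^{s_1}\times H^{s_2}}\le \tfrac{1}{8}\aiminnorm{U_1-U_2}_{H^{s_1}\times H^{s_2}}.
\]
Once this squeezing property is established, the standard abstract machinery (e.g.\ the theorems in \cite{Tem88,Lad91}) immediately converts it into finite Hausdorff and fractal dimensionality of $\mathcal A$ in $H^{s_1}\times H^{s_2}$.

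To produce the squeezing estimate I would work on the difference $(\eta,\boldsymbol v)=(\theta_1-\theta_2,\boldsymbol u_1-\boldsymbol u_2)$ of two strong solutions lying on $\mathcal A$, which satisfies
\[
\partial_t\eta+\kappa\Lambda^{2\beta}\eta=-\boldsymbol u_1\cdot\nabla\eta-\boldsymbol v\cdot\nabla\theta_2,
\]
\[
\partial_t\boldsymbol v+\nu\Lambda^{2\alpha}\boldsymbol v+\nabla\tilde\pi=-\boldsymbol u_1\cdot\nabla\boldsymbol v-\boldsymbol v\cdot\nabla\boldsymbol u_2+\eta\boldsymbol e_2.
\]
First I would test against $\Lambda^{2s_1}\eta$ and $\Lambda^{2s_2}\boldsymbol v$, using the commutator and Kato--Ponce estimates of Lemmas \ref{lem2.4}--\ref{lem2.5}, the interpolation inequality of Lemma \ref{lemma2.0.1}, and the uniform $H^{s_1}\times H^{s_2}$ bound on $\mathcal A$ from Theorem \ref{thm2.1}, to derive an energy inequality of the form
\[
\frac{d}{dt}E(t)+\kappa\aiminnorm{\Lambda^{s_1+\beta}\eta}^2+\nu\aiminnorm{\Lambda^{s_2+\alpha}\boldsymbol v}^2\le K\,E(t),
\]
where $E(t)=\aiminnorm{\Lambda^{s_1}\eta}^2+\aiminnorm{\Lambda^{s_2}\boldsymbol v}^2$ and $K$ depends only on the $\mathcal A$-bound. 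Setting $Q_N=I-P_N$ and exploiting the spectral inequality $\aiminnorm{\Lambda^{s+\gamma}Q_N w}^2\ge\lambda_{N+1}^{2\gamma}\aiminnorm{\Lambda^{s}Q_N w}^2$ on the dissipative terms upgrades this to
\[
\frac{d}{dt}\Phi_N(t)+c\,\lambda_{N+1}^{2\min(\alpha,\beta)}\Phi_N(t)\le K\,E(t),
\]
with $\Phi_N=\aiminnorm{\Lambda^{s_1}Q_N\eta}^2+\aiminnorm{\Lambda^{s_2}Q_N\boldsymbol v}^2$ and $c=\min(\kappa,\nu)$. Gronwall on the first inequality gives $\int_0^{t^*}E\,dt\le (e^{Kt^*}-1)/K\cdot E(0)$, and Gronwall on the second, combined with $\Phi_N(0)\le E(0)$, produces a bound $\Phi_N(t^*)\le C(t^*,K)\lambda_{N+1}^{-2\min(\alpha,\beta)}E(0)$ whose prefactor tends to zero as $N\to\infty$; choosing $N$ large enough, the squeezing inequality follows in the non-dominant-low-modes case.

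The step I expect to be the main obstacle is the $H^{s_i}$-level energy inequality for the difference. After $\Lambda^{s_2}$ is applied to $\boldsymbol u_1\cdot\nabla\boldsymbol v$ and $\boldsymbol v\cdot\nabla\boldsymbol u_2$ and the commutator of Lemma \ref{lem2.5} is invoked, one meets factors of the form $\aiminnorm{\Lambda^{s_2}\nabla\boldsymbol u_i}_{L^{p}}$, which are \emph{not} controlled uniformly on $\mathcal A$ at a fixed time. The remedy is to tune the exponents $p_1,p_2,q_1,q_2$ in Lemmas \ref{lem2.4}--\ref{lem2.5} together with Lemma \ref{lemma2.0.1} so that the excess derivative is absorbed by the fractional dissipation $\Lambda^{s_2+\alpha}\boldsymbol v$ (and likewise by $\Lambda^{s_1+\beta}\eta$ for the temperature equation), at the price of a constant $K$ depending on the attractor bound; the coupling term $\eta\boldsymbol e_2$ is easily paired with $\aiminnorm{\Lambda^{s_1}\eta}$ via the Cauchy--Schwarz inequality and condition \eqref{cond2}. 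Precisely the hypothesis $\alpha,\beta\in(\tfrac12,1)$ together with \eqref{cond1}--\eqref{cond2} is what keeps the required interpolation exponents admissible, so that once this delicate absorption is completed, the remainder of the argument is routine.
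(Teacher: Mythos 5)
Your overall strategy (Ladyzhenskaya squeezing for the difference of two trajectories on $\mathcal A$, energy estimates at the $H^{s_1}\times H^{s_2}$ level, spectral gap on the high modes) is the same as the paper's, and your first energy inequality
$\frac{d}{dt}E+\kappa\aiminnorm{\Lambda^{s_1+\beta}\eta}^2+\nu\aiminnorm{\Lambda^{s_2+\alpha}\boldsymbol v}^2\le K E$
is indeed available (it is the Lipschitz estimate \eqref{eq4.16.1} quoted from \cite{HH15}). But there is a genuine gap at the step where you pass to the high-mode quantity $\Phi_N$. First, the $\Phi_N$-inequality cannot be obtained by ``upgrading'' the $E$-inequality via the spectral inequality: since $E=\Phi_N+(\text{low-mode part})$ and the low-mode part has its own dynamics, nothing about $\frac{d}{dt}\Phi_N$ follows from a bound on $\frac{d}{dt}E$. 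One must redo the energy estimate testing against $\Lambda^{2s_1}Q_N\eta$ and $\Lambda^{2s_2}Q_N\boldsymbol v$, and the nonlinearity does not commute with $Q_N$.

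Second, and more importantly, when you do perform that projected estimate, the inequality $\frac{d}{dt}\Phi_N+c\lambda_{N+1}^{2\min(\alpha,\beta)}\Phi_N\le KE$ with $K$ depending only on the attractor bound is \emph{not} attainable. In estimating $\aimininner{\boldsymbol u_1\cdot\nabla\boldsymbol v}{\Lambda^{2s_2}Q_N\boldsymbol v}$ one must place $s_2+1-\alpha>s_2$ derivatives on one factor of the product; when that factor is the difference $\boldsymbol v$, interpolation produces a term of the form $C\aiminnorm{\Lambda^{s_2+\alpha}\boldsymbol v}^{2\alpha_1/\alpha}\aiminnorm{\Lambda^{s_2}\boldsymbol v}^{2(\alpha-\alpha_1)/\alpha}$ (and similarly with $\eta$ and $\beta$), i.e.\ a fractional power of the dissipation norm of the \emph{full} difference, not of $Q_N\boldsymbol v$. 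This cannot be absorbed into the available dissipation $\nu\aiminnorm{\Lambda^{s_2+\alpha}Q_N\boldsymbol v}^2$, nor is it bounded by $E$. The paper's resolution — the technical heart of its proof — is to keep these terms on the right-hand side of the differential inequality for $z=\Phi_N$ (its \eqref{eq4.14}), integrate against $e^{\rho_m t}$, and apply H\"older in time together with the time-integrated dissipation bound coming from the full-difference Lipschitz estimate; this yields contributions of size $\rho_m^{-(\alpha-\alpha_1)/\alpha}K(T)\,E(0)$ and $\rho_m^{-(\beta-\beta_1)/\beta}K(T)\,E(0)$, which still tend to $0$ as $m\to\infty$, so the squeezing constant $\delta<1$ can be achieved. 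Your proposal omits this mechanism, and without it the argument does not close. (A cosmetic remark: the paper avoids the commutator Lemma~\ref{lem2.5} altogether by writing $\boldsymbol u\cdot\nabla\boldsymbol v=\nabla\cdot(\boldsymbol u\otimes\boldsymbol v)$ and using only the product estimate of Lemma~\ref{lem2.4}.)
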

In order to prove Theorem~\ref{th2}, we first recall the following result from Ladyzhenskaya, see \cite{Lad90} and \cite{JT15}.
\begin{theorem} \label{th1}
Let $X$ be a Hilbert space, $S: X \rightarrow X$ be a map and 
$\mathcal{A} \subset X$ be a compact set such that $S(\mathcal{A})=\mathcal{A}$. Suppose that there exist 
$l \in [1, +\infty)$ and $\delta \in (0,1)$, such that $\forall a_1, a_2 \in \mathcal{A}$,
\[
\aiminnorm{S(a_1)-S(a_2)}_{X} \leq l\aiminnorm{a_1-a_2}_{X},
\]
\[
\aiminnorm{Q_N[S(a_1)-S(a_2)]}_{X} \leq \delta\aiminnorm{a_1-a_2}_{X},
\]
where $Q_N$ is the projection in $X$ onto some subspace $(X_N)^{\perp}$ of co-dimension $N \in \mathbb{N}$. Then,
\[
d_{H}(\mathcal{A}) \leq d_{F}(\mathcal{A}) \leq
N\frac{\ln(\frac{8G_{a}^2l^2}{1-\delta^2})}{\ln(\frac{2}{1+\delta^2})},
\]
where $d_{H}(\mathcal{A})$ and $d_{F}(\mathcal{A})$ are the Hausdorff and fractal dimensions of $\mathcal{A}$ respectively and $G_a$ is the Gauss constant:
\[
G_a = \frac{2}{\pi}
\int_0^1 \frac{dx}{\sqrt{1-x^4}} = 0.8346268....
\]
\end{theorem}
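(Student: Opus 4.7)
The plan is to prove the abstract Ladyzhenskaya squeezing estimate by the classical covering-number argument: the squeezing hypothesis forces $S$ to map any small ball in $\mathcal{A}$ into a thin $N$-dimensional ``pancake'', which one then recovers by a controlled number of smaller balls; iterating $S$ (using $S(\mathcal{A})=\mathcal{A}$) then yields a geometric decay of covering radii from which the fractal-dimension bound follows by a standard counting argument, after which one optimises in the one free parameter.

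First I would extract the pancake structure. For $a\in\mathcal{A}$ and $b\in B(a,r)\cap\mathcal{A}$, set $z:=S(b)-S(a)$. The two hypotheses give $\aiminnorm{z}_X\leq lr$ and $\aiminnorm{Q_N z}_X\leq\delta r$, hence
\[
\aiminnorm{(I-Q_N)z}_X^{2} \;=\; \aiminnorm{z}_X^{2} - \aiminnorm{Q_N z}_X^{2} \;\leq\; l^{2} r^{2} .
\]
Thus $S(B(a,r)\cap\mathcal{A})-S(a)$ lies in the pancake
\[
\mathcal{P}\;:=\;\bigl\{x\in X:\ \aiminnorm{(I-Q_N)x}_X\leq lr,\ \aiminnorm{Q_N x}_X\leq\delta r\bigr\},
\]
an $N$-dimensional disk of radius $lr$ thickened by $\delta r$ in the infinite-dimensional orthogonal direction.

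Second I would cover $\mathcal{P}$ by balls of radius $\rho r$ for some $\rho\in(\delta,1)$ to be chosen. Put $\eta:=\sqrt{\rho^{2}-\delta^{2}}$, so that $\sqrt{\eta^{2}+\delta^{2}}=\rho$. Using a sharp $N$-dimensional covering lemma — the number of Euclidean balls of radius $\eta r$ needed to cover an $N$-dimensional ball of radius $lr$, expressed through the Gauss constant $G_a$ — the $X_N$-disk of radius $lr$ is covered by at most $M:=\bigl(2G_a l/\eta\bigr)^{N}$ balls of radius $\eta r$ centred in $X_N$. Translating each centre arbitrarily in $X_N^{\perp}$ produces balls of radius $\rho r$ in $X$ that cover all of $\mathcal{P}$, and hence cover $S(B(a,r)\cap\mathcal{A})$.

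Third, since $\mathcal{A}$ is compact it admits, for some $r_0$, a finite cover by $N_0$ $X$-balls of radius $r_0$ centred in $\mathcal{A}$. Invariance $S(\mathcal{A})=\mathcal{A}$ together with iteration of Step~2 gives, for every $k\geq 1$, a cover of $\mathcal{A}$ by at most $N_0 M^{k}$ balls of radius $\rho^{k}r_0$. Denoting by $\mathcal{N}_{\mathcal{A}}(\varepsilon)$ the minimal $\varepsilon$-covering number of $\mathcal{A}$, this yields
\[
d_H(\mathcal{A})\;\leq\;d_F(\mathcal{A})\;=\;\limsup_{\varepsilon\to 0}\frac{\log\mathcal{N}_{\mathcal{A}}(\varepsilon)}{\log(1/\varepsilon)}\;\leq\;\frac{\log M}{\log(1/\rho)}\;=\;N\cdot\frac{\log\bigl(2G_a l/\eta\bigr)}{\log(1/\rho)}.
\]
Finally I would optimise over $\rho\in(\delta,1)$. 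The choice $\rho^{2}=(1+\delta^{2})/2$, equivalently $\eta^{2}=(1-\delta^{2})/2$, balances the numerator and denominator: $2G_a l/\eta=2G_a l\sqrt{2/(1-\delta^{2})}$ gives $(2G_a l/\eta)^{2}=8G_a^{2}l^{2}/(1-\delta^{2})$, and $1/\rho^{2}=2/(1+\delta^{2})$, producing exactly the stated bound
\[
d_F(\mathcal{A})\;\leq\; N\cdot\frac{\log\!\bigl(8G_a^{2}l^{2}/(1-\delta^{2})\bigr)}{\log\!\bigl(2/(1+\delta^{2})\bigr)}.
\]

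The main obstacle is Step~2: securing the sharp $N$-dimensional covering constant that produces exactly $G_a$. A crude Minkowski/volume argument already gives a bound of the form $(Cl/\eta)^{N}$ for an absolute $C$, and this is sufficient to prove finite-dimensionality; extracting the optimal Gauss-constant form requires the sharp symmetric-covering estimate of Ladyzhenskaya, based on a careful comparison of the volumes of inscribed and circumscribed balls in the thinnest symmetric covering configuration of $\mathbb R^{N}$. The remaining ingredients — the pancake decomposition of Step~1, the iteration using $S(\mathcal{A})=\mathcal{A}$ of Step~3, and the elementary optimisation of Step~4 — are all routine once the covering estimate with the correct constant is in place.
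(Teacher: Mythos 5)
First, a remark on scope: the paper does not prove Theorem~\ref{th1} at all; it is recalled verbatim from Ladyzhenskaya \cite{Lad90} (see also \cite{JT15}) and used as a black box in the proof of Theorem~\ref{th2}. So there is no in-paper argument to compare yours against; the only question is whether your sketch stands on its own. Its skeleton is the standard one, and the arithmetic is right: the pancake decomposition via Pythagoras, the covering of the pancake by balls of radius $\rho r$ with $\rho^2=\eta^2+\delta^2$, and the optimisation $\rho^2=(1+\delta^2)/2$ do reproduce the stated constant exactly, \emph{provided} the covering number $M=(2G_al/\eta)^N$ of Step~2 is granted. But as written the sketch has a genuine gap beyond the admitted reliance on that sharp covering lemma.

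The gap is in Step~3. The squeezing hypothesis is assumed only for $a_1,a_2\in\mathcal{A}$, so to reapply Step~2 to a ball from the $k$-th cover you need a reference point of that ball lying in $\mathcal{A}$. The balls produced by Step~2 are centred at $S(a)+c_i$ with $c_i$ a grid point of $X_N$; these centres are in general not in $\mathcal{A}$. The obvious repair --- pick $a'\in B(c_i,\rho r)\cap\mathcal{A}$ and replace the ball by $B(a',2\rho r)$ --- doubles the radius at every iteration, so contraction requires $2\rho<1$; your optimised choice gives $\rho=\sqrt{(1+\delta^2)/2}\geq 1/\sqrt{2}$, so $2\rho>1$ and the radii \emph{grow}, while even a suboptimal choice forces $\delta<1/2$ and changes the final constant. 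The same factor of $2$ reappears if one instead takes a maximal separated subset of $S(B(a,r)\cap\mathcal{A})$ (whose points do lie in $\mathcal{A}$), since the separation estimate then involves $\aiminnorm{b_i-b_j}_X\leq 2r$ rather than $r$. Obtaining, for \emph{every} $\delta\in(0,1)$, a contraction whose new centres are reusable at the next step and whose count carries the dimension-free constant $2G_a$ is precisely the content of Ladyzhenskaya's covering lemma; so the two items you set aside --- the ``routine'' iteration of Step~3 and the ``main obstacle'' of Step~2 --- are in fact the same non-trivial step, and the proof is not complete without it. For the purposes of this paper the sharp constant is irrelevant (only finiteness of $d_F(\mathcal{A})$ is used in Theorem~\ref{th2}), and your argument, restricted to $\delta<1/2$ with a crude $(1+2l/\eta)^N$ packing bound and centres chosen on the attractor, would suffice for that weaker conclusion.
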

\begin{proof}[Proof of Theorem~\ref{th2}]
Suppose $(\theta_1, \boldsymbol u_1, \pi_1)$, $(\theta_2, \boldsymbol u_2, \pi_2)$ are two strong solutions of 2D Boussinesq systems \eqref{eq1.1.1} with two initial data $(\theta_1^0, \boldsymbol u_1^0), (\theta_2^0, \boldsymbol u_2^0) \in \mathcal{A}$ respectively. Let $\eta = \theta_1- \theta_2$ and $\boldsymbol w = \boldsymbol u_1- \boldsymbol u_2$. Then $(\eta, \boldsymbol w)$ satisfies the following equations:
\begin{equation}\begin{cases}\label{eq4.1}
\partial_t\boldsymbol w + \boldsymbol u_1\cdot \nabla \boldsymbol w + \boldsymbol w \cdot \nabla \boldsymbol u_2 + \nu(-\Delta)^\alpha \boldsymbol w= -\nabla(\pi_1-\pi_2)+
\eta \boldsymbol e_2, \\
\partial_t \eta +\boldsymbol w\cdot \nabla \theta_1+ \boldsymbol u_2 \cdot \nabla \eta + \kappa(-\Delta)^{\beta}\eta = 0.
\end{cases}
\end{equation}
Let $P_m$ be the projection onto the subspace spanned by the first $m$ eigenvectors of the operator $\Lambda$ associated with the eigenvalues
$\lambda_1, \cdots, \lambda_m$ and set $Q_m = I - P_m$. Multiplying by $\Lambda^{2s_2} Q_m \boldsymbol w$, $\Lambda^{2s_1} Q_m \eta$ on the equations~\eqref{eq4.1}$_1$ and \eqref{eq4.1}$_2$ respectively, 
and taking the inner product in $L^2$, we obtain
\begin{equation}\begin{cases}\label{eq4.2}
\frac{1}{2} \frac{\mathrm{d}}{\mathrm{d}t} \aiminnorm{\Lambda^{s_2} Q_m\boldsymbol w}^2 + \nu \aiminnorm{\Lambda^{s_2+\alpha}Q_m \boldsymbol w}^2= - \aimininner{\boldsymbol u_1 \cdot \nabla \boldsymbol w}{\Lambda^{2s_2} Q_m \boldsymbol w} - \aimininner{\boldsymbol w \cdot \nabla \boldsymbol u_2}{\Lambda^{2s_2} Q_m \boldsymbol w}  \\
\hspace{180pt}+ \aimininner{\eta \boldsymbol e_2}{\Lambda^{2s_2} Q_m \boldsymbol w}, \\
\frac{1}{2} \frac{\mathrm{d}}{\mathrm{d}t} \aiminnorm{\Lambda^{s_1} Q_m \eta}^2 + \kappa \aiminnorm{\Lambda^{s_1+\beta} Q_m \eta}^2= 
- \aimininner{\boldsymbol w \cdot \nabla \theta_1}{\Lambda^{2s_1}Q_m \eta} -\aimininner{\boldsymbol u_2 \cdot \nabla \eta}{\Lambda^{2s_1}Q_m \eta}.
\end{cases}\end{equation}
As a preliminary, we see from the condition \eqref{cond1} that
\[
s_1 > 2\mathrm{max} \{1-\alpha, 1-\beta \} \geq 1-\alpha+1-\beta
= 2- \alpha- \beta.
\]
Hence, we could fix an $\alpha_1 \in (1/2, \alpha)$ such that
\[
s_1 \geq 2-\alpha_1-\beta,
\]
and we also fix a $\beta_1 \in (1/2, \beta)$. Since $s_2 \geq 1$, we have
\[
s_2 \geq 1 > 2-\alpha_1-\alpha, \qquad s_2 \geq 1 > 2-\beta_1-\beta. 
\]
Therefore, by the Sobolev embedding theorem, we have
\begin{equation} \label{t1}
H^{s_1} \subset \subset H^{2-\alpha_1-\beta} \subset \subset L^{\frac{2}{\alpha_1+\beta-1}},
\end{equation}
and
\begin{equation} \label{t2}
H^{s_2} \subset \subset H^{2-\alpha_1-\alpha} \subset \subset L^{\frac{2}{\alpha_1+\alpha-1}}, \qquad
H^{s_2} \subset \subset H^{2-\beta_1-\beta} \subset \subset L^{\frac{2}{\beta+\beta_1-1}}.
\end{equation}

We now estimate the term $\aimininner{\boldsymbol u_1 \cdot \nabla \boldsymbol w}{\Lambda^{2s_2} Q_m \boldsymbol w}$. Since $\boldsymbol w$ is divergence free, 
\begin{equation} \label{eq4.3}
\begin{split}
|\aimininner{\boldsymbol u_1 \cdot \nabla \boldsymbol w}{\Lambda^{2s_2} Q_m \boldsymbol w}|
& = \aimininner{\Lambda^{s_2-\alpha}(\boldsymbol u_1 \cdot \nabla \boldsymbol w)}{\Lambda^{s_2+\alpha} Q_m \boldsymbol w} \\
& \leq \aiminnorm{\Lambda^{s_2+1-\alpha}(\boldsymbol u_1 \otimes \boldsymbol w)}\aiminnorm{\Lambda^{s_2+\alpha} Q_m \boldsymbol w}.
\end{split}
\end{equation}
Let $p, q > 2$ such that $1/p+1/q=1/2$ and we choose 
\[
r= 2-\alpha-\alpha_1, \qquad p=\frac{2}{r}=\frac{2}{2-\alpha-\alpha_1} \qquad q= \frac{2}{1-r}=\frac{2}{\alpha+\alpha_1-1}.
\] 
Applying Lemma~\ref{lemma2.0} and Lemma~\ref{lem2.4}, we have
\begin{equation} \label{eq4.4}
\begin{split}
\aiminnorm{\Lambda^{s_2+1-\alpha}(\boldsymbol u_1 \otimes \boldsymbol w)}
& \leq C(\aiminnorm{\Lambda^{s_2+1-\alpha} \boldsymbol u_1}_{L^p}\aiminnorm{\boldsymbol w}_{L^q}+\aiminnorm{\Lambda^{s_2+1-\alpha} \boldsymbol w}_{L^p}\aiminnorm{\boldsymbol u_1}_{L^q}) \\
& \leq C(\aiminnorm{\Lambda^{s_2+2-\alpha-r} \boldsymbol u_1}
\aiminnorm{\boldsymbol w}_{L^{\frac{2}{1-r}}}+\aiminnorm{\Lambda^{s_2+2-\alpha-r} \boldsymbol w}\aiminnorm{\boldsymbol u_1}_{L^{\frac{2}{1-r}}}) \\
& \leq C(\aiminnorm{\Lambda^{s_2+\alpha_1} \boldsymbol u_1}
\aiminnorm{\Lambda^{s_2} \boldsymbol w}+\aiminnorm{\Lambda^{s_2+\alpha_1} \boldsymbol w}\aiminnorm{\Lambda^{s_2} \boldsymbol u_1}),
\end{split}
\end{equation}
where we used \eqref{t2} for the last inequality. Therefore, by the interpolation inequality in Lemma~\ref{lemma2.0.1} and the Cauchy-Schwarz inequality, we have
\begin{equation} \label{eq4.5}
\begin{split}
|\aimininner{\boldsymbol u_1 \cdot \nabla \boldsymbol w}{\Lambda^{2s_2} Q_m \boldsymbol w}| &\leq C\aiminnorm{\Lambda^{s_2+\alpha} \boldsymbol u_1}^{\frac{\alpha_1}{\alpha}}
\aiminnorm{\Lambda^{s_2} \boldsymbol u_1}^{1-\frac{\alpha_1}{\alpha}}
\aiminnorm{\Lambda^{s_2} \boldsymbol w}\aiminnorm{\Lambda^{s_2+\alpha} Q_m\boldsymbol w}\\
& \hspace{5pt}+ C\aiminnorm{\Lambda^{s_2+\alpha} \boldsymbol w}^{\frac{\alpha_1}{\alpha}}
\aiminnorm{\Lambda^{s_2} \boldsymbol w}^{1-\frac{\alpha_1}{\alpha}}\aiminnorm{\Lambda^{s_2} \boldsymbol u_1}
\aiminnorm{\Lambda^{s_2+\alpha} Q_m\boldsymbol w} \\
& \leq C\aiminnorm{\Lambda^{s_2+\alpha} \boldsymbol u_1}^{\frac{2\alpha_1}{\alpha}}
\aiminnorm{\Lambda^{s_2} \boldsymbol u_1}^{\frac{2(\alpha-\alpha_1)}{\alpha}}
\aiminnorm{\Lambda^{s_2} \boldsymbol w}^2 \\
& \hspace{5pt}+ C\aiminnorm{\Lambda^{s_2+\alpha} \boldsymbol w}^{\frac{2\alpha_1}{\alpha}}
\aiminnorm{\Lambda^{s_2} \boldsymbol w}^{\frac{2(\alpha-\alpha_1)}{\alpha}}
\aiminnorm{\Lambda^{s_2} \boldsymbol u_1}^2+
\frac{\nu}{6}\aiminnorm{\Lambda^{s_2+\alpha} Q_m \boldsymbol w}^2.
\end{split}
\end{equation}
Similar to \eqref{eq4.5}, we have the estimate
\begin{equation} \label{eq4.6}
\begin{split}
|\aimininner{\boldsymbol w \cdot \nabla \boldsymbol u_2}{\Lambda^{2s_2} Q_m \boldsymbol w}|  
& \leq C\aiminnorm{\Lambda^{s_2+\alpha} \boldsymbol w}^{\frac{2\alpha_1}{\alpha}}
\aiminnorm{\Lambda^{s_2} \boldsymbol w}^{\frac{2(\alpha-\alpha_1)}{\alpha}}
\aiminnorm{\Lambda^{s_2} \boldsymbol u_2}^2 \\
& + C\aiminnorm{\Lambda^{s_2+\alpha} \boldsymbol u_2}^{\frac{2\alpha_1}{\alpha}}
\aiminnorm{\Lambda^{s_2} \boldsymbol u_2}^{\frac{2(\alpha-\alpha_1)}{\alpha}}
\aiminnorm{\Lambda^{s_2} \boldsymbol w}^2+
\frac{\nu}{6}\aiminnorm{\Lambda^{s_2+\alpha} Q_m \boldsymbol w}^2.
\end{split}
\end{equation}
Next, we estimate the term $\aimininner{\boldsymbol w \cdot \nabla \theta_1}{\lambda^{2s_1}Q_m \eta}$. Since $\boldsymbol w$ is divergence free, 
\begin{equation}  \nonumber
\begin{split}
|\aimininner{\boldsymbol w \cdot \nabla \theta_1}{\Lambda^{2s_1}Q_m \eta}|
& = \aimininner{\Lambda^{s_1-\beta}(\boldsymbol w \cdot \nabla \theta_1)}{\Lambda^{s_1+\beta} Q_m \eta} \\
& \leq \aiminnorm{\Lambda^{s_1+1-\beta}(\boldsymbol w \cdot \theta_1)}\aiminnorm{\Lambda^{s_1+\beta} Q_m \eta}.
\end{split}
\end{equation}
Let $p_1, q_1,p_2, q_2 > 2$ such that $1/p_1+1/q_1= 1/p_2+1/q_2=1/2$ and we choose 
\[
r_1= 2-\alpha_1-\beta, \qquad p_1=\frac{2}{r_1}=\frac{2}{2-\alpha_1-\beta}, \qquad q_1= \frac{2}{1-r_1}=\frac{2}{\alpha_1+\beta-1},
\]
and
\[
r_2= 2-\beta-\beta_1, \qquad p_2=\frac{2}{r_2}=\frac{2}{2-\beta-\beta_1}, \qquad q_2= \frac{2}{1-r_2}=\frac{2}{\beta+\beta_1-1}. 
\]
Applying Lemma~\ref{lemma2.0} and Lemma~\ref{lem2.4}, since $s_1 \leq s_2$, we have
\begin{equation} \label{eq4.8}
\begin{split}
\aiminnorm{\Lambda^{s_1+1-\beta}(\boldsymbol w \cdot \theta_1)}
& \leq C(\aiminnorm{\Lambda^{s_1+1-\beta} \boldsymbol w}_{L^{p_1}}\aiminnorm{\theta_1}_{L^{q_1}}+\aiminnorm{\Lambda^{s_1+1-\beta} \theta_1}_{L^{p_2}}\aiminnorm{\boldsymbol w}_{L^{q_2}}) \\
& \leq C(\aiminnorm{\Lambda^{s_1+2-\beta-r_1} \boldsymbol w}
\aiminnorm{\theta_1}_{L^{\frac{2}{1-r_1}}}+\aiminnorm{\Lambda^{s_1+2-\beta-r_2} \theta_1}\aiminnorm{\boldsymbol w}_{L^{\frac{2}{1-r_2}}}) \\
& \leq C(\aiminnorm{\Lambda^{s_2+\alpha_1} \boldsymbol w}
\aiminnorm{\Lambda^{s_1} \theta_1}+\aiminnorm{\Lambda^{s_1+\beta_1} \theta_1}\aiminnorm{\Lambda^{s_2} \boldsymbol w}),
\end{split}
\end{equation}
where we used \eqref{t1}-\eqref{t2} for the last inequality. Thus, applying the interpolation inequality in Lemma~\ref{lemma2.0.1} and the Cauchy-Schwarz inequality, we obtain
\begin{equation} \label{eq4.9}
\begin{split}
|\aimininner{\boldsymbol w \cdot \nabla \theta_1}{\Lambda^{2s_1}Q_m \eta}|
&\leq C(\aiminnorm{\Lambda^{s_2+\alpha} \boldsymbol w}^{\frac{\alpha_1}{\alpha}}
\aiminnorm{\Lambda^{s_2} \boldsymbol w}^{1-\frac{\alpha_1}{\alpha}}
\aiminnorm{\Lambda^{s_1} \theta_1}\aiminnorm{\Lambda^{s_1+\beta} Q_m\eta} \\
& \hspace{5pt} +C\aiminnorm{\Lambda^{s_1+\beta} \theta_1}^{\frac{\beta_1}{\beta}}
\aiminnorm{\Lambda^{s_1} \theta_1}^{1-\frac{\beta_1}{\beta}}\aiminnorm{\Lambda^{s_2} \boldsymbol w}\aiminnorm{\Lambda^{s_1+\beta} Q_m\eta} \\
& \leq C\aiminnorm{\Lambda^{s_2+\alpha} \boldsymbol w}^{\frac{2\alpha_1}{\alpha}}
\aiminnorm{\Lambda^{s_2} \boldsymbol w}^{\frac{2(\alpha-\alpha_1)}{\alpha}}
\aiminnorm{\Lambda^{s_1} \theta_1}^2\\
& \hspace{5pt}+ C\aiminnorm{\Lambda^{s_1+\beta} \theta_1}^{\frac{2\beta_1}{\beta}}
\aiminnorm{\Lambda^{s_1} \theta_1}^{\frac{2(\beta-\beta_1)}{\beta}}
\aiminnorm{\Lambda^{s_2} \boldsymbol w}^2+
\frac{\kappa}{6}\aiminnorm{\Lambda^{s_1+\beta} Q_m \eta}^2.
\end{split}
\end{equation}
Similar to \eqref{eq4.9}, we have the estimate
\begin{equation} \label{eq4.10}
\begin{split}
|\aimininner{\boldsymbol u_2 \cdot \nabla \eta}{\Lambda^{2s_1}Q_m \eta}| & \leq C\aiminnorm{\Lambda^{s_2+\alpha} \boldsymbol u_2}^{\frac{2\alpha_1}{\alpha}}
\aiminnorm{\Lambda^{s_2} \boldsymbol u_2}^{\frac{2(\alpha-\alpha_1)}{\alpha}}
\aiminnorm{\Lambda^{s_1} \eta}^2 \\
& + C\aiminnorm{\Lambda^{s_1+\beta} \eta}^{\frac{2\beta_1}{\beta}}
\aiminnorm{\Lambda^{s_1} \eta}^{\frac{2(\beta-\beta_1)}{\beta}}
\aiminnorm{\Lambda^{s_2} \boldsymbol u_2}^2+
\frac{\kappa}{6}\aiminnorm{\Lambda^{s_1+\beta} Q_m \eta}^2.\end{split}
\end{equation}
Finally, applying the interpolation inequality in Lemma~\ref{lemma2.0.1} and the Cauchy-Schwarz inequality, we have
\begin{equation} \label{eq4.11}
\begin{split}
\aiminabs{\aimininner{\eta \boldsymbol e_2}{\Lambda^{2s_2} Q_m\boldsymbol w}}
& = \aiminabs{\aimininner{\Lambda^{s_1+\beta}Q_m \eta \boldsymbol e_2}{\Lambda^{2s_2-s_1-\beta} Q_m\boldsymbol w}} \\
&\leq \aiminnorm{\Lambda^{s_1+\beta}Q_m\eta}\aiminnorm{\Lambda^{2s_2-s_1-\beta}Q_m\boldsymbol w} \\
& \leq \aiminnorm{\Lambda^{s_1+\beta}Q_m\eta}\aiminnorm{\Lambda^{s_2+\alpha}Q_m\boldsymbol w}^{1-r^*}
\aiminnorm{\Lambda^{s_2} Q_m\boldsymbol w}^{r^*} \\
& \leq \frac{\kappa}{6} \aiminnorm{\Lambda^{s_1+\beta}Q_m\eta}^2+ \frac{\nu}{6} \aiminnorm{\Lambda^{s_2+\alpha}Q_m\boldsymbol w}^2 + C_1\aiminnorm{\Lambda^{s_2} Q_m\boldsymbol w}^2,
\end{split}
\end{equation}
where $\tilde{r} = (s_1+\alpha+\beta-s_2)/\alpha > 0$ and 
$C_1= \frac{1}{\kappa^{\tilde{r}}\nu^{\frac{1}{\tilde{r}}-1}}$.

It was shown in \cite{HH15} that when the solutions $(\theta_i, \boldsymbol u_i) \in \mathcal{A} \subset H^{s_1} \times H^{s_2}$ for $i = 1, 2$, $\aiminnorm{\Lambda^{s_1+\beta} \theta_i}$ and $\aiminnorm{\Lambda^{s_2+\alpha} \boldsymbol u_i}$ are uniformly bounded independent of $t$ for $i = 1, 2$. Therefore, summing \eqref{eq4.2},\eqref{eq4.5}-\eqref{eq4.6}, and \eqref{eq4.9}-\eqref{eq4.11} together, we obtain
\begin{equation}\label{eq4.13}
\begin{split}
&\frac{\mathrm{d}}{\mathrm{d}t} (\aiminnorm{\Lambda^{s_2} Q_m\boldsymbol w}^2 
+ \aiminnorm{\Lambda^{s_1}Q_m \eta}^2) + \nu \aiminnorm{\Lambda^{s_2+\alpha}Q_m \boldsymbol w}^2
+\kappa \aiminnorm{\Lambda^{s_1+\beta} Q_m \eta}^2 \\
\leq  & \hspace{3pt} C\aiminnorm{\Lambda^{s_2+\alpha} \boldsymbol w}^{\frac{2\alpha_1}{\alpha}}\aiminnorm{\Lambda^{s_2} \boldsymbol w}^{\frac{2(\alpha-\alpha_1)}{\alpha}} 
C\aiminnorm{\Lambda^{s_1+\beta} \eta}^{\frac{2\beta_1}{\beta}}\aiminnorm{\Lambda^{s_1} \eta}^{\frac{2(\beta-\beta_1)}{\beta}} \\ 
&+  C(\aiminnorm{\Lambda^{s_2} \boldsymbol w}^2+ 
\aiminnorm{\Lambda^{s_1} \eta}^2)+
C_1\aiminnorm{\Lambda^{s_2} Q_m\boldsymbol w}^2.
\end{split}
\end{equation}
By Poincar\'e's inequality, we have
\[
\lambda_m^{2\alpha} \aiminnorm{\Lambda^{s_2} Q_m \boldsymbol w}^2
\leq \aiminnorm{\Lambda^{s_2+\alpha} Q_m \boldsymbol w}^2,
\qquad \mathrm{and} \qquad
\lambda_m^{2\beta} \aiminnorm{\Lambda^{s_1} Q_m \eta}^2
\leq \aiminnorm{\Lambda^{s_1+\beta} Q_m \eta}^2.
\]
We denote 
\[
y(t) := \aiminnorm{\Lambda^{s_2} \boldsymbol w(t)}^2+ \aiminnorm{\Lambda^{s_1} \eta(t)}^2, \qquad
z(t) := \aiminnorm{\Lambda^{s_2} Q_m\boldsymbol w(t)}^2+ \aiminnorm{\Lambda^{s_1} Q_m\eta(t)}^2,
\]
and let $\rho_m = \frac{1}{2}\min \{ \nu\lambda_m^{\alpha},  
\kappa\lambda_m^{\beta}\}$. Then we can choose $m$ large enough, so that $C_1 \leq \rho_m$. Hence, it follows from \eqref{eq4.13} that
\begin{equation}\label{eq4.14}
\begin{split}
z'(t)+ \rho_m z(t) \leq   
Cy(t) + C\aiminnorm{\Lambda^{s_2+\alpha} \boldsymbol w}^{\frac{2\alpha_1}{\alpha}}\aiminnorm{\Lambda^{s_2} \boldsymbol w}^{\frac{2(\alpha-\alpha_1)}{\alpha}}
+C\aiminnorm{\Lambda^{s_1+\beta} \eta}^{\frac{2\beta_1}{\beta}}\aiminnorm{\Lambda^{s_1} \eta}^{\frac{2(\beta-\beta_1)}{\beta}}.
\end{split}
\end{equation}
Now, integrating \eqref{eq4.14} with respect to $t \in [0, T]$, we have
\begin{equation} \label{eq4.15}
\begin{split}
z(T) & \leq e^{-\rho_mT}z(0)+ Ce^{-\rho_mT}\int_0^T e^{\rho_mt}
y(t) {\mathrm{d}t} \\
& \hspace{7pt} + Ce^{-\rho_mT}\int_0^T e^{\rho_mt}\aiminnorm{\Lambda^{s_2+\alpha} \boldsymbol w}^{\frac{2\alpha_1}{\alpha}}\aiminnorm{\Lambda^{s_2} \boldsymbol w}^{\frac{2(\alpha-\alpha_1)}{\alpha}} {\mathrm{d}t} \\
& \hspace{7pt} + Ce^{-\rho_mT}\int_0^T e^{\rho_mt}\aiminnorm{\Lambda^{s_1+\beta} \eta}^{\frac{2\beta_1}{\beta}}\aiminnorm{\Lambda^{s_1} \eta}^{\frac{2(\beta-\beta_1)}{\beta}} {\mathrm{d}t} \\
& = : I_1+I_2+I_3+I_4.
\end{split}
\end{equation}
First, we notice that 
\begin{equation} \label{eq4.15}
I_1 \leq e^{-\rho_mT}y(0).
\end{equation}
Next, we recall the results from \cite[Section 4.2]{HH15} that if $(\theta_i, \boldsymbol u_i)$ are two strong solutions in $\mathcal{A} \subset H^{s_1} \times H^{s_2}$ for $i=1,2$, then for all $t \geq 0$, 
\[
\begin{split}
&\aiminnorm{\Lambda^{s_2}\boldsymbol w(t)}^2+\aiminnorm{\Lambda^{s_1}\eta(t)}^2 
+\nu \int_0^t \aiminnorm{\Lambda^{s_2+\alpha} \boldsymbol w}^2 {\mathrm{d}s} +\kappa \int_0^t \aiminnorm{\Lambda^{s_1+\beta} \eta}^2 {\mathrm{d}s}\\
\leq \hspace{3pt} &C(\aiminnorm{\Lambda^{s_2}\boldsymbol w(0)}^2+\aiminnorm{\Lambda^{s_1}\eta(0)}^2) \\
&\mathrm{exp} \left\{\int_0^t \aiminnorm{\Lambda^{s_2+\alpha} \boldsymbol u_1(s)}^2+\aiminnorm{\Lambda^{s_2+\alpha} \boldsymbol u_2(s)}^2+\aiminnorm{\Lambda^{s_1+\beta} \theta_2(s)}^2 {\mathrm{d}s}    \right\}.
\end{split}
\]
Thus,
\begin{equation} \label{eq4.16.1}
y(t)+\sigma \int_0^t \aiminnorm{\Lambda^{s_2+\alpha} \boldsymbol w}^2 +\aiminnorm{\Lambda^{s_1+\beta} \eta}^2 {\mathrm{d}s}
\leq y(0)K(t), \qquad \forall t \geq 0,
\end{equation}
where $\sigma = \mathrm{min}\{\nu, \kappa \}$ and 
$$K(t)= C\mathrm{exp} \left\{\int_0^t \aiminnorm{\Lambda^{s_2+\alpha} \boldsymbol u_1(s)}^2+\aiminnorm{\Lambda^{s_2+\alpha} \boldsymbol u_2(s)}^2+\aiminnorm{\Lambda^{s_1+\beta} \theta_2(s)}^2 {\mathrm{d}s}  \right\},$$
which is a positive continuous non-decreasing function on $[0, \infty)$ and independent of the initial data.
Therefore,
\begin{equation} \label{eq4.17}
I_2 \leq  Ce^{-\rho_mT}y(0)K(T)\int_0^T e^{\rho_mt} {\mathrm{d}t}
\leq C\rho_m^{-1}K(T)y(0).
\end{equation}
Finally, 
\begin{equation} \label{eq4.18}
\begin{split}
I_3 &\leq Ce^{-\rho_mT}\int_0^T e^{\rho_mt}y(t)^{\frac{\alpha-\alpha_1}{\alpha}} \aiminnorm{\Lambda^{s_2+\alpha} \boldsymbol w}^{\frac{2\alpha_1}{\alpha}} {\mathrm{d}t} \\
&\leq Ce^{-\rho_mT}y(0)^{\frac{\alpha-\alpha_1}{\alpha}}
K(T)^{\frac{\alpha-\alpha_1}{\alpha}} \int_0^T e^{\rho_mt}  \aiminnorm{\Lambda^{s_2+\alpha} \boldsymbol w}^{\frac{2\alpha_1}{\alpha}} {\mathrm{d}t} \\
&\leq Ce^{-\rho_mT}y(0)^{\frac{\alpha-\alpha_1}{\alpha}}
K(T)^{\frac{\alpha-\alpha_1}{\alpha}}
\left( \int_0^T e^{\rho_mt \cdot \frac{\alpha}{\alpha-\alpha_1}}  {\mathrm{d}t} \right)^{\frac{\alpha-\alpha_1}{\alpha}}
\left(\int_0^T  \aiminnorm{\Lambda^{s_2+\alpha} \boldsymbol w}^2 {\mathrm{d}t} \right)^{\frac{\alpha_1}{\alpha}} \\
&\leq C\rho_m^{-\frac{\alpha-\alpha_1}{\alpha}}K(T)y(0),
\end{split}
\end{equation}
and similarly
\begin{equation} \label{eq4.19}
\begin{split}
I_4 &\leq Ce^{-\rho_mT}\int_0^T e^{\rho_mt}y(t)^{\frac{\beta-\beta_1}{\beta}} \aiminnorm{\Lambda^{s_1+\beta} \eta}^{\frac{2\beta_1}{\beta}} {\mathrm{d}t} \\
&\leq Ce^{-\rho_mT}y(0)^{\frac{\beta-\beta_1}{\beta}}
K(T)^{\frac{\beta-\beta_1}{\beta}} \int_0^T e^{\rho_mt}  \aiminnorm{\Lambda^{s_1+\beta}\eta}^{\frac{2\beta_1}{\beta}} {\mathrm{d}t} \\
&\leq Ce^{-\rho_mT}y(0)^{\frac{\beta-\beta_1}{\beta}}
K(T)^{\frac{\beta-\beta_1}{\beta}}
\left( \int_0^T e^{\rho_mt \cdot \frac{\beta}{\beta-\beta_1}}  {\mathrm{d}t} \right)^{\frac{\beta-\beta_1}{\beta}}
\left(\int_0^T  \aiminnorm{\Lambda^{s_1+\beta} \eta}^2 {\mathrm{d}t} \right)^{\frac{\beta_1}{\beta}} \\
&\leq C\rho_m^{-\frac{\beta-\beta_1}{\beta}}K(T)y(0).
\end{split}
\end{equation}
Summing the estimates in \eqref{eq4.15}, \eqref{eq4.17}, \eqref{eq4.18} and \eqref{eq4.19} together, we have
\begin{equation} \label{eq4.20}
z(T) \leq \left(e^{-\rho_mT}+\rho_m^{-1}K(T)+\rho_m^{-\frac{\alpha-\alpha_1}{\alpha}}K(T)+ \rho_m^{-\frac{\beta-\beta_1}{\beta}}K(T) \right)y(0).
\end{equation}
Therefore, for any fixed $T>0$, $l= K(T) \in [1, \infty)$, 
some $\delta \in (0,1)$, and given two strong solutions $(\theta_i, \boldsymbol u_i) \in \mathcal{A}$, combing the results from \eqref{eq4.16.1} and \eqref{eq4.20}, we can choose $m$ large enough, such that 
\[
y(T) \leq l y(0), \qquad \mathrm{and} \qquad
z(T) \leq \delta y(0).
\]
Hence, Theorem~\ref{th2} immediately follows from Theorem~\ref{th1}.
\end{proof}

\section{Determining modes on the attractor} \label{sec4}
In \cite{HH15}, it was shown that the solution operator $\{S(t)\}_{t\geq 0}$ of the 2D Boussinesq system with periodic boundary condition possess a global attractor which is invariant, compact and connected in the Sobolev space. Then we next consider the concept: determining modes (the number of the first Fourier modes) on the attractor, which was established in \cite{FMRT01}. The theories are based on the dimension analysis and suggest that the long-time behavior of turbulence flows is determined by a finite number of degrees of freedom. 

In this section, we are going to prove that there exists a positive number $m$ large enough, such that if the projections on the space spanned by the first $m$ eigenvectors of the operator $\Lambda$
of two different trajectories on the attractor $\mathcal{A}$ coincide for all $t \in \mathbb{R}$, then these two trajectories actually coincide for all $t \in \mathbb{R}$.

\subsection{The Definition of Determining Modes}
Let us consider two vectors $(\theta_1, \boldsymbol{u}_1)=(\theta_1(x,t), \boldsymbol{u_1}(x,t))$ and $(\theta_2, \boldsymbol{u_2})=(\theta_2(x,t), \boldsymbol{u}_2(x,t))$
satisfying 2D Boussinesq systems with corresponding the forces $f=f(x)$ and $g=g(x)$. More precisely, $(\theta_1, \boldsymbol{u}_1)$ and $(\theta_2, \boldsymbol{u}_2)$ satisfy the equations:
\begin{equation}\begin{cases}\label{eq1.2}
\partial_t\boldsymbol u_1 + \boldsymbol u_1\cdot \nabla \boldsymbol u_1 + \nu(-\Delta)^\alpha \boldsymbol u_1=- \nabla \pi_1 + \theta_1 \boldsymbol e_2, \qquad x \in \Omega, \hspace{2pt} t > 0, \\
\nabla \cdot \boldsymbol u_1=0,  \qquad\qquad\qquad\qquad\qquad\qquad\qquad x \in \Omega, \hspace{2pt} t > 0,\\
\partial_t \theta_1 +\boldsymbol u_1\cdot \nabla \theta_1+ \kappa(-\Delta)^{\beta}\theta_1 = f, \qquad\qquad\qquad x \in \Omega, \hspace{2pt} t > 0,
\end{cases}\end{equation}
and
\begin{equation}\begin{cases}\label{eq1.2}
\partial_t\boldsymbol u_2 + \boldsymbol u_2\cdot \nabla \boldsymbol u_2 + \nu(-\Delta)^\alpha \boldsymbol u_2=- \nabla \pi_2 + \theta_2 \boldsymbol e_2, \qquad x \in \Omega, \hspace{2pt} t > 0, \\
\nabla \cdot \boldsymbol u_2=0,  \qquad\qquad\qquad\qquad\qquad\qquad\qquad x \in \Omega, \hspace{2pt} t > 0,\\
\partial_t \theta_2 +\boldsymbol u_2\cdot \nabla \theta_2+ \kappa(-\Delta)^{\beta}\theta_2 = g, \qquad\qquad\qquad x \in \Omega, \hspace{2pt} t > 0.
\end{cases}\end{equation}
We recall the Galerkin projections $P_m$ associated with the first $m$ Fourier modes of the operator $\Lambda$ and expand each solution in the form:
\[
\theta_1(x,t) = \sum_{k=1}^{\infty} \hat{\theta}_{k}^1(t)\omega_k(x),\qquad
\boldsymbol u_1(x,t) = \sum_{k=1}^{\infty}  \hat{\boldsymbol u}_{k}^1(t)\omega_k(x),
\]
and
\[
\theta_2(x,t) = \sum_{k=1}^{\infty} \hat{\theta}_{k}^2(t) \omega_k(x), \qquad
\boldsymbol u_2(x,t) = \sum_{k=1}^{\infty} \hat{\boldsymbol u}_{k}^2(t) \omega_k(x).
\]
The Galerkin projections correspond to the first $m$ Fourier modes:
\[
P_m \theta_1(x,t) = \sum_{k=1}^{m} \hat{\theta}_{k}^1(t) \omega_k(x), \qquad
P_m\boldsymbol u_1(x,t) = \sum_{k=1}^{m} \hat{\boldsymbol u}_{k}^1(t) \omega_k(x) ,
\]
and
\[
P_m\theta_2(x,t) = \sum_{k=1}^{m} \hat{\theta}_{k}^2(t) \omega_k(x), \qquad
P_m\boldsymbol u_2(x,t) = \sum_{k=1}^{m} \hat{\boldsymbol u}_{k}^2(t) \omega_k(x).
\]

Now we give the definition of determining modes for trajectories on the global attractor. See \cite{FMRT01} and \cite{CJRT12}.
\begin{definition}
The first m modes associated with $P_m$ are called the determining modes on the global attractor $\mathcal{A}$ if for two trajectories $(\theta_1(x,t), \boldsymbol u_1(x,t))$ and $(\theta_2(x,t), \boldsymbol u_2(x,t))$ on the global attractor $\mathcal{A}$, the condition 
\begin{equation} \label{2.0.5}
P_m (\theta_1(x,t), \boldsymbol u_1(x,t)) = P_m (\theta_2(x,t), \boldsymbol u_2(x,t)),  \qquad \forall t \in \mathbb{R}
\end{equation}
implies
\begin{equation}
(\theta_1(t), \boldsymbol u_1(t)) = (\theta_2(t), \boldsymbol u_2(t)),   \qquad \forall t \in \mathbb{R}.
\end{equation}
\end{definition}

\subsection{$H^{2\beta} \times H^{2\alpha}$-estimates for $(\theta, \boldsymbol u)$} \label{sec4.2} From \cite[Proposition 3.4]{HH15}, we could readily see that $\aiminnorm{\Lambda^{2\beta} \theta}$ and $\aiminnorm{\Lambda^{2\alpha} \theta}$ are uniformly bounded, we would like to find out the explicit dependence on the viscosity $\nu$ and the diffusivity $\kappa$ for the bounds of $\aiminnorm{\Lambda^{2\beta} \theta}$ and $\aiminnorm{\Lambda^{2\alpha} \boldsymbol u}$. Hence throughout this section, we emphasize that the constant C below is independent of the viscosity $\nu$ and the diffusivity $\kappa$.
\begin{proposition} \label{prop1}
Under the assumption of Theorem~\ref{thm2.1}, suppose $(\theta, \boldsymbol u)$ is on the global attractor $\mathcal{A}$. If $f \in L^{\frac{4}{2\beta-1}} \cap H^{\beta}$, then 
\begin{equation} \label{eqnn1}
\aiminnorm{\Lambda^{2\beta} \theta}^2+\aiminnorm{\Lambda^{2\alpha}\boldsymbol u}^2
\leq  C\left(\frac{(1+\kappa)^2e^{2\nu}}{\kappa^3\nu^2}\right)\aiminnorm{\Lambda^{\beta}f}^2e^{2M}.
\end{equation}
where $M=M(\alpha, \beta, \kappa, \nu, \aiminnorm{f}, \aiminnorm{f}_{L^{\frac{4}{2\beta-1}}})$ and the constant C is independent of $\nu$ and $\kappa$.
\end{proposition}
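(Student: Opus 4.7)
My plan is a three-level regularity bootstrap, moving from $L^2$ up through $H^\beta\times H^\alpha$ to $H^{2\beta}\times H^{2\alpha}$. At each level I apply the Uniform Gronwall Lemma (Lemma~\ref{lemma 2.1}) to convert $L^2_{\text{loc}}$-in-time bounds produced at the previous level into pointwise-in-time bounds at the current level. Because the target estimate requires explicit $\nu,\kappa$ dependence, I must carefully track the weights coming from every application of Young's inequality.

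Level 1 is a basic energy estimate: take the $L^2$-inner product of the momentum equation with $\boldsymbol u$ and of the temperature equation with $\theta$; the advection terms vanish by $\nabla\cdot\boldsymbol u=0$, and Young's inequality on the coupling $\langle\theta\boldsymbol e_2,\boldsymbol u\rangle$ together with Poincar\'e (which is sharp since $\lambda_1=1$) yields $L^\infty(\mathbb R;L^2)$ bounds on $\|\theta\|$ and $\|\boldsymbol u\|$ controlled by $\|f\|$ and inverse powers of $\kappa,\nu$, plus $L^2_{\text{loc}}$ bounds on $\|\Lambda^\beta\theta\|$ and $\|\Lambda^\alpha\boldsymbol u\|$. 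Level 2 tests the $\theta$-equation with $\Lambda^{2\beta}\theta$ and the $\boldsymbol u$-equation with $\Lambda^{2\alpha}\boldsymbol u$. The transport terms are controlled by the fractional Sobolev inequality (Lemma~\ref{lemma2.0}) and the Kato--Ponce product estimate (Lemma~\ref{lem2.4}), with the highest-order factors absorbed into the dissipation $\kappa\|\Lambda^{2\beta}\theta\|^2$ and $\nu\|\Lambda^{2\alpha}\boldsymbol u\|^2$. The forcing term $\langle f,\Lambda^{2\beta}\theta\rangle$ is split via H\"older--Sobolev and yields the $\|f\|_{L^{4/(2\beta-1)}}$ norm that appears inside $M$. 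Combined with Level~1 and Lemma~\ref{lemma 2.1}, this gives uniform pointwise bounds on $\|\Lambda^\beta\theta\|^2+\|\Lambda^\alpha\boldsymbol u\|^2$ and $L^2_{\text{loc}}$ bounds on $\|\Lambda^{2\beta}\theta\|^2+\|\Lambda^{2\alpha}\boldsymbol u\|^2$.

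Level 3 tests the $\theta$-equation with $\Lambda^{4\beta}\theta$ and the $\boldsymbol u$-equation with $\Lambda^{4\alpha}\boldsymbol u$, producing
\[
\tfrac{1}{2}\tfrac{d}{dt}\bigl(\|\Lambda^{2\beta}\theta\|^2+\|\Lambda^{2\alpha}\boldsymbol u\|^2\bigr)+\kappa\|\Lambda^{3\beta}\theta\|^2+\nu\|\Lambda^{3\alpha}\boldsymbol u\|^2
\]
plus nonlinear, coupling, and forcing remainders. I rewrite the transport term as
\[
\langle\boldsymbol u\cdot\nabla\theta,\Lambda^{4\beta}\theta\rangle
=\langle\Lambda^{2\beta}(\boldsymbol u\cdot\nabla\theta)-\boldsymbol u\cdot\nabla\Lambda^{2\beta}\theta,\Lambda^{2\beta}\theta\rangle
\]
since $\langle\boldsymbol u\cdot\nabla\Lambda^{2\beta}\theta,\Lambda^{2\beta}\theta\rangle=0$ by divergence-freeness, and estimate the commutator via Lemma~\ref{lem2.5}, balancing interpolation between the Level~2 pointwise bound $\|\Lambda^\beta\theta\|$ and the Level~3 dissipation $\|\Lambda^{3\beta}\theta\|$. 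The analogous rewrite applies to $\langle\boldsymbol u\cdot\nabla\boldsymbol u,\Lambda^{4\alpha}\boldsymbol u\rangle$. The forcing contributes $\langle\Lambda^\beta f,\Lambda^{3\beta}\theta\rangle\le\tfrac{\kappa}{4}\|\Lambda^{3\beta}\theta\|^2+\tfrac{1}{\kappa}\|\Lambda^\beta f\|^2$, which accounts for the $\|\Lambda^\beta f\|^2$ factor in the target bound, while the buoyancy coupling is absorbed via interpolation into both dissipations. A final application of the Uniform Gronwall Lemma on the resulting differential inequality, using the Level~2 $L^2_{\text{loc}}$ bounds for the $y$- and $g$-integrals, yields the claim.

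The main obstacle will be the careful $\nu,\kappa$ bookkeeping needed to recover exactly the prefactor $(1+\kappa)^2e^{2\nu}/(\kappa^3\nu^2)$ together with an exponential factor $e^{2M}$ that absorbs the remaining $\|f\|$-dependent data. Each Young's-inequality application produces a factor of the form $\nu^{-a}\kappa^{-b}$, each Uniform Gronwall iteration exponentiates the time-averaged coefficient from the previous level, and the three polynomial factors $\kappa^{-3}$, $\nu^{-2}$, $(1+\kappa)^2$ have to be collected in a single pass through the three levels. In particular, isolating the $e^{2\nu}$ term — which presumably arises from a Gronwall iteration at Level~1 or Level~2 where the buoyancy coupling forces a coefficient proportional to $\nu$ — is the most delicate bookkeeping step, and choosing H\"older exponents in Lemmas~\ref{lem2.4}--\ref{lem2.5} that do not overshoot the stated powers of $\nu^{-1},\kappa^{-1}$ is the main technical challenge.
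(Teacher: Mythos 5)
Your overall architecture --- a hierarchy of energy estimates obtained by testing with $\Lambda^{4\beta}\theta$ and $\Lambda^{4\alpha}\boldsymbol u$, closed by repeated applications of the Uniform Gronwall Lemma~\ref{lemma 2.1}, with careful tracking of the $\nu,\kappa$ weights --- is the same as the paper's, and your choice to handle the transport terms by the commutator decomposition $\langle\boldsymbol u\cdot\nabla\theta,\Lambda^{4\beta}\theta\rangle=\langle\Lambda^{2\beta}(\boldsymbol u\cdot\nabla\theta)-\boldsymbol u\cdot\nabla\Lambda^{2\beta}\theta,\Lambda^{2\beta}\theta\rangle$ and Lemma~\ref{lem2.5} is a legitimate variant of the paper's route, which instead writes $|\langle\boldsymbol u\cdot\nabla\theta,\Lambda^{4\beta}\theta\rangle|\le\|\Lambda^{2\beta-\beta_1}(\boldsymbol u\cdot\nabla\theta)\|\,\|\Lambda^{2\beta+\beta_1}\theta\|$ and uses the product estimate of Lemma~\ref{lem2.4}. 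However, there are two genuine gaps. First, your Level~1 ($L^2$ energy estimate) does not produce the inputs that are actually needed to close the trilinear transport estimates at Levels~2 and~3 with the stated explicit constants: one factor in each trilinear term must be placed in $L^\infty_t L^p_x$ for a large exponent $p$ (namely $p=\frac{4}{2\beta-1}$, or $\frac{2}{\alpha+\beta-1}$ in the second case below). These bounds do not follow from the $L^2$ estimate, nor from your Level~2 output, since $H^\beta\not\hookrightarrow L^{4/(2\beta-1)}$ when $\beta<3/4$; they come from the $L^p$ maximum principle for the fractionally dissipative $\theta$-equation, $\|\theta\|_{L^p}\le C\|f\|_{L^p}/\kappa$, and from a uniform $H^1$ bound on $\boldsymbol u$, which is precisely where the factors $\|f\|_{L^{4/(2\beta-1)}}$, $e^{2\nu}$ and $\kappa^{-3}\nu^{-2}$ in $M$ and in \eqref{eqnn1} originate (the paper imports these from \cite{HH15}). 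Your attribution of the $\|f\|_{L^{4/(2\beta-1)}}$ norm to a H\"older--Sobolev splitting of the forcing pairing $\langle f,\Lambda^{2\beta}\theta\rangle$ is therefore incorrect; the forcing is handled simply by $\langle\Lambda^\beta f,\Lambda^{3\beta}\theta\rangle\le\frac{C}{\kappa}\|\Lambda^\beta f\|^2+\frac{\kappa}{4}\|\Lambda^{3\beta}\theta\|^2$ and contributes only $\|\Lambda^\beta f\|$.

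Second, the symmetric functional $\|\Lambda^{2\beta}\theta\|^2+\|\Lambda^{2\alpha}\boldsymbol u\|^2$ cannot be closed when $\beta>\alpha$. The transport term in the $\theta$-equation at Level~3 forces a velocity norm of order roughly $\|\Lambda^{2\beta+\frac12}\boldsymbol u\|$ (whether you use Lemma~\ref{lem2.4} or the commutator term $\|\Lambda^{2\beta}\boldsymbol u\|_{L^{q_1}}\|\nabla\theta\|_{L^{q_2}}$ from Lemma~\ref{lem2.5}), and this must be interpolated between $\|\Lambda^{2\alpha}\boldsymbol u\|$ and the dissipation $\|\Lambda^{3\alpha}\boldsymbol u\|$; that requires $2\beta+\tfrac12\le 3\alpha$, which fails for, e.g., $\alpha$ near $\tfrac12$ and $\beta$ near $1$. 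The paper resolves this by splitting into the cases $\beta\le\alpha$ and $\beta>\alpha$, and in the second case running the whole scheme on the modified functional $\|\Lambda^{2\beta}\theta\|^2+\|\Lambda^{2\beta+\alpha}\boldsymbol u\|^2$ (with dissipation $\nu\|\Lambda^{2\beta+2\alpha}\boldsymbol u\|^2$), from which the $H^{2\alpha}$ bound on $\boldsymbol u$ follows a posteriori by Poincar\'e. Your plan needs this case distinction, or some equivalent device, to be complete.
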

\begin{proof}
Let us first consider that $\beta \leq \alpha$.
Taking the inner product of  the equation \eqref{eq1.1.1}$_3$ with $\Lambda^{4\beta}\theta$ in $L^2$, we have
\begin{equation} \nonumber
\frac{1}{2} \frac{\mathrm{d}}{\mathrm{d}t} \aiminnorm{\Lambda^{2\beta} \theta}^2 + \aimininner{\boldsymbol u \cdot \nabla \theta}{\Lambda^{4\beta}\theta}+ \kappa \aiminnorm{\Lambda^{3\beta} \theta}^2 = \aimininner{\Lambda^{\beta}f}{\Lambda^{3\beta}\theta} \leq \frac{C}{\kappa}\aiminnorm{\Lambda^{\beta}f}^2+
\frac{\kappa}{4}\aiminnorm{\Lambda^{3\beta}\theta}^2.
\end{equation}
Let $\beta_1= 1/4+\beta/2$, such that $1/2 < \beta_1 < \beta$.
Since $\boldsymbol u$ is divergence free, applying Lemma~\ref{lem2.4} and choosing $p_1=q_1=1/(1-\beta_1)$, $p_2=q_2= 2/(2\beta_1-1)$, such that $1/p_1+1/p_2=1/q_1+1/q_2=1/2$, we have
\begin{equation} \label{eq4.2.1}
\begin{split}
|\aimininner{\boldsymbol u \cdot \nabla \theta}{\Lambda^{4\beta}\theta}| 
& \leq \aiminnorm{\Lambda^{2\beta-\beta_1}(\boldsymbol u \cdot \nabla \theta)}\aiminnorm{\Lambda^{2\beta+\beta_1}\theta} \\
& \leq C\aiminnorm{\Lambda^{1+2\beta-\beta_1}(\boldsymbol u \cdot \theta)} \aiminnorm{\Lambda^{2\beta+\beta_1}\theta} \\
& \leq C(\aiminnorm{\Lambda^{1+2\beta-\beta_1}\boldsymbol u}_{L^{p_1}}\aiminnorm{\theta}_{L^{p_2}}+ \aiminnorm{\Lambda^{1+2\beta-\beta_1}\theta}_{L^{q_1}}\aiminnorm{\boldsymbol u}_{L^{q_2}})\aiminnorm{\Lambda^{2\beta+\beta_1}\theta} \\
& \leq C(\aiminnorm{\Lambda^{2\beta+\beta_1}\theta}\aiminnorm{\Lambda^{2\beta+\beta_1} \boldsymbol u}\aiminnorm{\theta}_{L^{\frac{4}{2\beta-1}}} +
\aiminnorm{\Lambda^{2\beta+\beta_1}\theta}^2\aiminnorm{\boldsymbol u}_{L^{\frac{4}{2\beta-1}}}) \\
& \leq C(\aiminnorm{\Lambda^{2\beta+\beta_1}\theta}^2+\aiminnorm{\Lambda^{2\beta+\beta_1} \boldsymbol u}^2)\aiminnorm{\theta}_{L^{\frac{4}{2\beta-1}}} +
C\aiminnorm{\Lambda^{2\beta+\beta_1}\theta}^2\aiminnorm{\boldsymbol u}_{L^{\frac{4}{2\beta-1}}}.
\end{split}
\end{equation}
Since $(\theta(t), \boldsymbol u(t))$ is on the attractor $\mathcal{A}$ for all $t \in \mathbb{R}$, then we recall the uniform $L^{p}$-estimates for $(\theta, \boldsymbol u)$ in \cite[Section 3.1, 3.3]{HH15} that for all $p \geq 2$, $ \theta, \boldsymbol u \in L^{\infty}(0, \infty; L^p)$ with
\begin{equation} \label{eq4.2.0}
\aiminnorm{\theta(t)}_{L^p} \leq C\frac{\aiminnorm{f}_{L^p}}{\kappa},
\qquad \mathrm{and} \qquad
\aiminnorm{\boldsymbol u(t)}_{L^p} \leq 
C\aiminnorm{\boldsymbol u(t)}_{H^1} \leq
C\frac{e^{\nu}(1+\kappa)}{\nu^3\kappa^3} \aiminnorm{f}^2.
\end{equation}
where the constant $C=C(p)$ only depends on the exponent $p$.
Hence, setting
\[
A= \frac{\aiminnorm{f}_{L^{\frac{4}{2\beta-1}}}}{\kappa},  \qquad  \mathrm{and} \qquad
B=\frac{e^{\nu}(1+\kappa)}{\nu^3\kappa^3} \aiminnorm{f}^2,
\]
and using \eqref{eq4.2.0}, the interpolation inequality,  Young's inequality, and the assumption $\beta \leq \alpha$, we find from \eqref{eq4.2.1} 
that 
\begin{equation} \nonumber
\begin{split}
|\aimininner{\boldsymbol u \cdot \nabla \theta}{\Lambda^{4\beta}\theta}| 
& \leq  CA\aiminnorm{\Lambda^{2\beta+\beta_1}\theta}^2+
CA\aiminnorm{\Lambda^{2\beta+\beta_1} \boldsymbol u}^2
+CB\aiminnorm{\Lambda^{2\beta+\beta_1}\theta}^2 \\
& \leq  C(A+B)\aiminnorm{\Lambda^{2\beta} \theta}^{2-2\frac{\beta_1}{\beta}}\aiminnorm{\Lambda^{3\beta} \theta}^{2\frac{\beta_1}{\beta}}+ CA\aiminnorm{\Lambda^{2\beta} \boldsymbol u}^{2-2\frac{\beta_1}{\beta}}\aiminnorm{\Lambda^{3\beta} \boldsymbol u}^{2\frac{\beta_1}{\beta}} \\
& \leq  
C\kappa^{-\frac{2\beta+1}{2\beta-1}}(A+B)^{\frac{4\beta}{2\beta-1}}\aiminnorm{\Lambda^{2\beta} \theta}^2+\frac{\kappa}{4}\aiminnorm{\Lambda^{3\beta} \theta}^2  \\
& \hspace{10pt} + C\nu^{-\frac{2\beta+1}{2\beta-1}}A^{\frac{4\beta}{2\beta-1}}\aiminnorm{\Lambda^{2\alpha} \boldsymbol u}^2+
\frac{\nu}{4} \aiminnorm{\Lambda^{3\alpha} \boldsymbol u}^2.
\end{split}
\end{equation}
Therefore, we obtain
\begin{equation} \label{eq4.4.1}
\begin{split}
\frac{1}{2}\frac{\mathrm{d}}{\mathrm{d}t} \aiminnorm{\Lambda^{2\beta} \theta}^2 + \kappa \aiminnorm{\Lambda^{3\beta} \theta}^2 
& \leq  
C\kappa^{-\frac{2\beta+1}{2\beta-1}}(A+B)^{\frac{4\beta}{2\beta-1}}\aiminnorm{\Lambda^{2\beta} \theta}^2 +
C\nu^{-\frac{2\beta+1}{2\beta-1}}A^{\frac{4\beta}{2\beta-1}}\aiminnorm{\Lambda^{2\alpha} \boldsymbol u}^2 \\
& \hspace{10pt} + \frac{\nu}{4} \aiminnorm{\Lambda^{3\alpha} \boldsymbol u}^2  +  \frac{C}{\kappa} \aiminnorm{\Lambda^{\beta} f}^2.
\end{split}
\end{equation}

Next, we take the inner product of  the equation \eqref{eq1.1.1}$_1$ with $\Lambda^{4\alpha} \boldsymbol u$ in $L^2$. Since $\boldsymbol u$ is divergence free, we hence have
\[
\frac{1}{2} \frac{\mathrm{d}}{\mathrm{d}t} \aiminnorm{\Lambda^{2\alpha}\boldsymbol u}^2 +  \nu \aiminnorm{\Lambda^{3\alpha} \boldsymbol u}^2 = \aimininner{\theta \boldsymbol e_2}{\Lambda^{4\alpha}\boldsymbol u}
-\aimininner{\boldsymbol u \cdot \nabla \boldsymbol u}{\Lambda^{4\alpha}\boldsymbol u}.
\]
Similar to the estimate~\eqref{eq4.2.1}, we choose $\alpha_1 = 1/4+\alpha/2$, such that $1/2 < \alpha_1 < \alpha$. Then applying the Interpolation inequality, we find
\begin{equation} \nonumber
\begin{split}
|\aimininner{\boldsymbol u \cdot \nabla \boldsymbol u}{\Lambda^{4\alpha}\boldsymbol u}| 
&\leq C\aiminnorm{\Lambda^{2\alpha+\alpha_1} \boldsymbol u}^2
\aiminnorm{\boldsymbol u}_{L^{\frac{4}{2\alpha-1}}}  \\
& \leq CB\aiminnorm{\Lambda^{2\alpha+\alpha_1} \boldsymbol u}^2 \\
& \leq C\nu^{-\frac{2\alpha+1}{2\alpha-1}}B^{\frac{4\alpha}
{2\alpha-1}}\aiminnorm{\Lambda^{2\alpha} \boldsymbol u}^2
+ \frac{\nu}{4}\aiminnorm{\Lambda^{3\alpha} \boldsymbol u}^2.
\end{split}
\end{equation}
In addition, applying Poincar\'e's and the Cauchy-Schwarz inequalities, we have
\[
\begin{split}
|\aimininner{\theta \boldsymbol e_2}{\Lambda^{4\alpha}\boldsymbol u}| & = |\aimininner{\Lambda^{\alpha} \theta \boldsymbol e_2}{\Lambda^{3\alpha}\boldsymbol u}|
\leq \aiminnorm{\Lambda^{\alpha} \theta}
\aiminnorm{\Lambda^{3\alpha}\boldsymbol u}
\leq \aiminnorm{\Lambda^{2\beta} \theta}
\aiminnorm{\Lambda^{3\alpha}\boldsymbol u} \\
& \leq \frac{1}{\nu}\aiminnorm{\Lambda^{2\beta} \theta}^2+
\frac{\nu}{4}\aiminnorm{\Lambda^{3\alpha}\boldsymbol u}^2.
\end{split}
\]
Thus, we obtain 
\begin{equation} \label{eq4.5.1}
\frac{\mathrm{d}}{\mathrm{d}t} \aiminnorm{\Lambda^{2\alpha}\boldsymbol u}^2 +  \nu \aiminnorm{\Lambda^{3\alpha} \boldsymbol u}^2 \leq C\nu^{-\frac{2\alpha+1}{2\alpha-1}}B^{\frac{4\alpha}
{2\alpha-1}}\aiminnorm{\Lambda^{2\alpha} \boldsymbol u}^2+
\frac{1}{\nu}\aiminnorm{\Lambda^{2\beta} \theta}^2.
\end{equation}
Summing equations \eqref{eq4.4.1} and \eqref{eq4.5.1}, we have,
\begin{equation} \label{eq4.6.1}
\begin{split}
& \frac{\mathrm{d}}{\mathrm{d}t} \big(\aiminnorm{\Lambda^{2\beta} \theta}^2+\aiminnorm{\Lambda^{2\alpha} \boldsymbol u}^2\big) + \kappa \aiminnorm{\Lambda^{3\beta} \theta}^2  + \nu \aiminnorm{\Lambda^{3\alpha} \boldsymbol u}^2  \\
\leq  & \hspace{3pt}
C\left(\kappa^{-\frac{2\beta+1}{2\beta-1}}(A+B)^{\frac{4\beta}{2\beta-1}}+\frac{1}{\nu}\right)\aiminnorm{\Lambda^{2\beta}\theta}^2 \\ & + C\left(\nu^{-\frac{2\beta+1}{2\beta-1}}A^{\frac{4\beta}{2\beta-1}}+ \nu^{-\frac{2\alpha+1}{2\alpha-1}}B^{\frac{4\alpha}
{2\alpha-1}}\right)
\aiminnorm{\Lambda^{2\alpha} \boldsymbol u}^2+
\frac{C}{\kappa}\aiminnorm{\Lambda^{\beta} f}^2.
\end{split}
\end{equation}
Let
\begin{equation} \label{e0}
M_1 := \mathrm{max} \{ \kappa^{-\frac{2\beta+1}{2\beta-1}}(A+B)^{\frac{4\beta}{2\beta-1}}+\frac{1}{\nu}, \hspace{5pt} \nu^{-\frac{2\beta+1}{2\beta-1}}A^{\frac{4\beta}{2\beta-1}}+ \nu^{-\frac{2\alpha+1}{2\alpha-1}}B^{\frac{4\alpha}
{2\alpha-1}} \}, 
\end{equation}
then it follows from \eqref{eq4.6.1} that
\begin{equation} \label{e4.1.1}
\frac{\mathrm{d}}{\mathrm{d}t} \big(\aiminnorm{\Lambda^{2\beta} \theta}^2+\aiminnorm{\Lambda^{2\alpha} \boldsymbol u}^2\big) + \kappa \aiminnorm{\Lambda^{3\beta} \theta}^2  + \nu \aiminnorm{\Lambda^{3\alpha} \boldsymbol u}^2 
\leq CM_1\big(\aiminnorm{\Lambda^{2\beta} \theta}^2+\aiminnorm{\Lambda^{2\alpha} \boldsymbol u}^2\big)+ \frac{C}{\kappa}\aiminnorm{\Lambda^{\beta}f}^2.
\end{equation}

In order to apply the Uniform Gronwall inequality and obtain the uniform bounds for $\Lambda^{2\beta} \theta$ and $\Lambda^{2\alpha} \boldsymbol u$, we have to find the uniform time average bounds for $\Lambda^{2\beta} \theta$ and $\Lambda^{2\alpha} \boldsymbol u$.
Taking the inner product of \eqref{eq1.1.1}
with $(\Lambda^{\beta} \theta, \Lambda^{\alpha} \boldsymbol u)$ in $L^2$ and using analogous arguments as for \eqref{e4.1.1}, we have
\begin{equation} \label{eqn4.4.1}
\frac{\mathrm{d}}{\mathrm{d}t} \big(\aiminnorm{\Lambda^{\beta} \theta}^2+\aiminnorm{\Lambda^{\alpha} \boldsymbol u}^2\big) + \kappa \aiminnorm{\Lambda^{2\beta} \theta}^2  + \nu \aiminnorm{\Lambda^{2\alpha} \boldsymbol u}^2 
\leq CM_1\big(\aiminnorm{\Lambda^{\beta} \theta}^2+\aiminnorm{\Lambda^{\alpha} \boldsymbol u}^2\big)+ \frac{C}{\kappa}\aiminnorm{f}^2.
\end{equation}
It has been shown in \cite[Section 3.1, 3.3]{HH15} that the time averages of $\aiminnorm{\Lambda^{\beta} \theta}^2$ and $\aiminnorm{\Lambda^{\alpha} \boldsymbol u}^2$ are uniformly bounded. That is, for $t \geq t_1(\theta_0, \boldsymbol u_0)$ large enough,
\[
\int^{t+1}_t \aiminnorm{\Lambda^{\beta} \theta}^2 {\mathrm{d}s} 
\leq C\frac{1+\kappa}{\kappa^3} \aiminnorm{f}^2,
\]
and 
\[
\int^{t+1}_t \aiminnorm{\Lambda^{\alpha} \boldsymbol u}^2 {\mathrm{d}s} 
\leq \frac{1}{\nu}\aiminnorm{\boldsymbol u}^2+\frac{1}{\nu^2} \int^{t+1}_t \aiminnorm{\theta}^2 {\mathrm{d}s} 
\leq C\frac{1+\nu}{\kappa^3\nu^2} \aiminnorm{f}^2.
\]
Applying Uniform Gronwall Lemma~\ref{lemma 2.1} on the differential inequality \eqref{eqn4.4.1} with $a_1=CM_1$, $a_2=\frac{C}{\kappa} \aiminnorm{f}^2$ and $a_3=C\frac{(1+\kappa)(1+\nu)^2}{\kappa^3\nu^2} \aiminnorm{f}^2$,  we have for $t \geq t_2(\theta_0, \boldsymbol u_0)$,
\begin{equation} \nonumber
\begin{split}
\aiminnorm{\Lambda^{\beta} \theta}^2+\aiminnorm{\Lambda^{\alpha}\boldsymbol u}^2 
& \leq C\left(\frac{(1+\kappa)(1+\nu)^2}{\kappa^3\nu^2} +\frac{1}{\kappa}\right)\aiminnorm{f}^2e^{M_1} \\
& \leq C\left(\frac{(1+\kappa)^2(1+\nu)^2}{\kappa^3\nu^2}\right)\aiminnorm{f}^2e^{M_1}.
\end{split}
\end{equation}
In additional, for $t \geq t_2(\theta_0, \boldsymbol u_0)$,
\begin{equation} \nonumber
\begin{split}
\int_{t}^{t+1} \aiminnorm{\Lambda^{2\beta} \theta}^2+\aiminnorm{\Lambda^{2\alpha}\boldsymbol u}^2 {\mathrm{d}s}
&\leq C\left(\frac{(1+\kappa)^2(1+\nu)^2}{\kappa^3\nu^2}\right)\aiminnorm{f}^2e^{M_1}+ \frac{C}{\kappa}\aiminnorm{f}^2 \\
& \leq C\left(\frac{(1+\kappa)^2(1+\nu)^2}{\kappa^3\nu^2}\right)\aiminnorm{f}^2e^{M_1}.
\end{split}
\end{equation}
Applying Uniform Gronwall Lemma~\ref{lemma 2.1} again on the differential inequality \eqref{e4.1.1} with $a_1=CM_1$, $a_2=\frac{C}{\kappa} \aiminnorm{\Lambda^{\beta} f}^2$ and $a_3=C\frac{(1+\kappa)^2(1+\nu)^2}{\kappa^3\nu^2} \aiminnorm{f}^2e^M_1$,  we have for $t \geq t_3(\theta_0, \boldsymbol u_0)$,
\begin{equation} \nonumber
\begin{split}
\aiminnorm{\Lambda^{2\beta} \theta}^2+\aiminnorm{\Lambda^{2\alpha} \boldsymbol u}^2 
&\leq C\left(\frac{1}{\kappa} \aiminnorm{\Lambda^{\beta} f}^2+\frac{(1+\kappa)^2(1+\nu)^2}{\kappa^3\nu^2} \aiminnorm{f}^2e^{M_1}\right)e^{M_1} \\
& \leq C\left(\frac{(1+\kappa)^2(1+\nu)^2}{\kappa^3\nu^2}\right)\aiminnorm{\Lambda^{\beta}f}^2e^{2M_1}.
\end{split}
\end{equation}
Since we assume that the solution $(\theta, \boldsymbol u)$ is on the global attractor, we can shift the initial time, so that  
\begin{equation} \label{eq4.7}
\aiminnorm{\Lambda^{2\beta} \theta}^2+\aiminnorm{\Lambda^{2\alpha} \boldsymbol u}^2 
\leq C\left(\frac{(1+\kappa)^2(1+\nu)^2}{\kappa^3\nu^2}\right)\aiminnorm{\Lambda^{\beta}f}^2e^{2M_1}, \qquad \forall t \in \mathbb{R}.
\end{equation}
For the case $\beta > \alpha$, similar to the equation~\eqref{e4.1.1}, we find
\begin{equation} \label{w1}
\begin{split}
&\frac{\mathrm{d}}{\mathrm{d}t} \big(\aiminnorm{\Lambda^{2\beta} \theta}^2+\aiminnorm{\Lambda^{2\beta+\alpha} \boldsymbol u}^2\big) + \kappa \aiminnorm{\Lambda^{3\beta} \theta}^2  + \nu \aiminnorm{\Lambda^{2\beta+2\alpha} \boldsymbol u}^2 \\ 
\leq & \hspace{2pt} CM_2\big(\aiminnorm{\Lambda^{2\beta} \theta}^2+\aiminnorm{\Lambda^{2\beta+\alpha} \boldsymbol u}^2\big)+ \frac{C}{\kappa}\aiminnorm{\Lambda^{\beta}f}^2,
\end{split}
\end{equation}
where
\begin{equation} \label{e123}
M_2 := \mathrm{max} \{ \kappa^{-\frac{\alpha+\beta}{\alpha+\beta-1}}(A_1+B)^{\frac{2\alpha+2\beta}{\alpha+\beta-1}}+\frac{1}{\nu}, \hspace{5pt} \nu^{-\frac{\beta-\alpha}{3\alpha-\beta}}A_1^{\frac{2\alpha}{3\alpha-\beta}}+ \nu^{-\frac{2\alpha+1}{2\alpha-1}}B^{\frac{4\alpha}{2\alpha-1}} \}, 
\end{equation}
and $A_1=\frac{\aiminnorm{f}_{L^{\frac{2}{\alpha+\beta-1}}}}{\kappa}$. In addition, we have
\begin{equation} \label{w2}
\begin{split}
&\frac{\mathrm{d}}{\mathrm{d}t} \big(\aiminnorm{\Lambda^{\beta} \theta}^2+\aiminnorm{\Lambda^{\beta+\frac{\alpha}{2}} \boldsymbol u}^2\big) + \kappa \aiminnorm{\Lambda^{2\beta} \theta}^2  + \nu \aiminnorm{\Lambda^{2\beta+\alpha} \boldsymbol u}^2 \\
\leq & \hspace{2pt} CM_2\big(\aiminnorm{\Lambda^{\beta} \theta}^2+\aiminnorm{\Lambda^{\beta+\frac{\alpha}{2}} \boldsymbol u}^2\big)+ \frac{C}{\kappa}\aiminnorm{f}^2.
\end{split}
\end{equation}
We recall the results from \cite[Section 3.1, 3.3]{HH15} that the time averages of $\aiminnorm{\Lambda^{\beta} \theta}^2$ and $\aiminnorm{\Lambda^{\beta+\frac{\alpha}{2}} \boldsymbol u}^2$ are uniformly bounded. That is, for $t \geq t_4(\theta_0, \boldsymbol u_0)$ large enough,
\[
\int^{t+1}_t \aiminnorm{\Lambda^{\beta} \theta}^2 {\mathrm{d}s} 
\leq C\frac{1+\kappa}{\kappa^3} \aiminnorm{f}^2,
\]
and 
\[
\int^{t+1}_t \aiminnorm{\Lambda^{\beta+\frac{\alpha}{2}} \boldsymbol u}^2 {\mathrm{d}s} 
\leq \int^{t+1}_t \aiminnorm{\Lambda^{1+\alpha} \boldsymbol u}^2 {\mathrm{d}s} 
\leq C\frac{e^{2\nu}(1+\kappa)}{\kappa^3\nu^2} \aiminnorm{f}^2.
\]
Applying Uniform Gronwall Lemma~\ref{lemma 2.1} on the differential inequality \eqref{w2} with $a_1=CM_2$, $a_2=\frac{C}{\kappa} \aiminnorm{f}^2$ and $a_3=C\frac{(1+\kappa)e^{2\nu}}{\kappa^3\nu^2} \aiminnorm{f}^2$,  we have for $t \geq t_5(\theta_0, \boldsymbol u_0)$,
\begin{equation} \nonumber
\begin{split}
\aiminnorm{\Lambda^{\beta} \theta}^2+\aiminnorm{\Lambda^{\beta+\frac{\alpha}{2}}\boldsymbol u}^2 
& \leq C\left(\frac{(1+\kappa)e^{2\nu}}{\kappa^3\nu^2} +\frac{1}{\kappa}\right)\aiminnorm{f}^2e^{M_2}\\
& \leq C\left(\frac{(1+\kappa)^2e^{2\nu}}{\kappa^3\nu^2}\right)\aiminnorm{f}^2e^{M_2}.
\end{split}
\end{equation}
Applying Uniform Gronwall Lemma~\ref{lemma 2.1} again on the differential inequality \eqref{w1} with $a_1=CM_1$, $a_2=\frac{C}{\kappa} \aiminnorm{\Lambda^{\beta} f}^2$ and $a_3=C\frac{(1+\kappa)^2e^{2\nu}}{\kappa^3\nu^2} \aiminnorm{f}^2e^{M_1}$,  we have for $t \geq t_6(\theta_0, \boldsymbol u_0)$,
\begin{equation} \nonumber
\begin{split}
\aiminnorm{\Lambda^{2\beta} \theta}^2+\aiminnorm{\Lambda^{2\beta+\alpha} \boldsymbol u}^2 
&\leq C\left(\frac{1}{\kappa} \aiminnorm{\Lambda^{\beta} f}^2+\frac{(1+\kappa)^2(1+\nu)^2}{\kappa^3\nu^2} \aiminnorm{f}^2e^{M_2}\right)e^{M_2} \\
& \leq C\left(\frac{(1+\kappa)^2e^{2\nu}}{\kappa^3\nu^2}\right)\aiminnorm{\Lambda^{\beta}f}^2e^{2M_2}.
\end{split}
\end{equation}
Since we assume that the solution $(\theta, \boldsymbol u)$ is on the global attractor, we can shift the initial time, so that  
\begin{equation} \label{w7}
\aiminnorm{\Lambda^{2\beta} \theta}^2+\aiminnorm{\Lambda^{2\beta+\alpha} \boldsymbol u}^2 
\leq C\left(\frac{(1+\kappa)^2e^{2\nu}}{\kappa^3\nu^2}\right)\aiminnorm{\Lambda^{\beta}f}^2e^{2M_1}, \qquad \forall t \in \mathbb{R}.
\end{equation}
Since $\alpha, \beta \in (1/2, 1)$, we have 
\[
H^{2\alpha} \subset \subset H^{2\beta+\alpha} \qquad
\mathrm{and} \qquad L^{\frac{2}{\alpha+\beta-1}} \subset
L^{\frac{4}{2\beta-1}}.
\]
Let
\begin{equation} \label{e111} 
M := \mathrm{max} \{M_1, M_2 \},
\end{equation} 
we hence conclude \eqref{eqnn1}, based on equations \eqref{eq4.7}
and \eqref{w7}.
\end{proof}
\subsection{Main results} \label{sec4.3}
\begin{theorem} \label{thm1}
Under the assumptions of Theorem~\ref{thm2.1}, let $(\theta_1, \boldsymbol u_1)$ and $(\theta_2, \boldsymbol u_2)$ be two trajectories of the system \eqref{eq1.1.1} on the attractor $\mathcal{A}$. If $P_m(\theta_1(t), \boldsymbol u_1(t)) =
P_m(\theta_2(t), \boldsymbol u_2(t))$ for all $t \in \mathbb{R}$, and for some integer $m > 0$ large enough, such that
\begin{equation} \label{e1}
\lambda_{m+1}^{\alpha-\frac{1}{2}} \geq \frac{2C(\kappa N^{\frac{1}{2}}+N+1)}{\kappa \nu},
\end{equation}
where the number $N$ is defined in \eqref{N} below.  
Then,  we have $(\theta_1(t), \boldsymbol u_1(t)) = (\theta_2(t), \boldsymbol u_2(t))$, for all $t \in \mathbb{R}$.
\end{theorem}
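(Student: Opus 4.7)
The plan is to work with the differences $\eta = \theta_1 - \theta_2$ and $\boldsymbol w = \boldsymbol u_1 - \boldsymbol u_2$, and to derive a linear differential inequality for $y(t) := \|\boldsymbol w(t)\|^2 + \|\eta(t)\|^2$ that, combined with the boundedness of $y$ on the attractor, forces $y \equiv 0$ on $\mathbb R$. The hypothesis $P_m(\theta_1, \boldsymbol u_1) = P_m(\theta_2, \boldsymbol u_2)$ for every $t \in \mathbb R$ translates into $\eta = Q_m\eta$ and $\boldsymbol w = Q_m\boldsymbol w$, so the sharpened Poincar\'e inequality $\|\Lambda^\sigma g\| \leq \lambda_{m+1}^{\sigma - \rho}\|\Lambda^\rho g\|$ (valid for $\sigma \leq \rho$ whenever $g = Q_m g$) is at my disposal. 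This is the only mechanism by which the spectral threshold $\lambda_{m+1}$ enters the argument; the difference pair $(\eta,\boldsymbol w)$ itself satisfies the system~\eqref{eq4.1} (now with common forcing, so the $f$-terms cancel), which is my starting point.

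First I would run an $L^2$ energy identity: taking the inner product of the momentum equation in~\eqref{eq4.1} with $\boldsymbol w$ and the temperature equation with $\eta$, and invoking $\nabla\cdot\boldsymbol u_1 = \nabla\cdot\boldsymbol u_2 = \nabla\cdot\boldsymbol w = 0$ to cancel the pressure and the $\boldsymbol u_i\cdot\nabla$-type transport terms, produces
\begin{equation*}
\tfrac12 \tfrac{d}{dt}\bigl(\|\boldsymbol w\|^2 + \|\eta\|^2\bigr) + \nu\|\Lambda^\alpha\boldsymbol w\|^2 + \kappa\|\Lambda^\beta\eta\|^2 = -\langle\boldsymbol w\cdot\nabla\boldsymbol u_2,\boldsymbol w\rangle - \langle\boldsymbol w\cdot\nabla\theta_1,\eta\rangle + \langle\eta\,\boldsymbol e_2,\boldsymbol w\rangle.
\end{equation*}
The whole task then reduces to dominating the three right-hand terms by (small multiples of) the dissipation on the left.

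Next I would feed in the $H^{2\beta}\times H^{2\alpha}$ attractor bound from Proposition~\ref{prop1}: writing $N$ for the right-hand side of~\eqref{eqnn1}, both $\theta_i$ and $\boldsymbol u_i$ belong to $L^\infty$ (since $2\alpha, 2\beta > 1$ in two dimensions) with $\|\boldsymbol u_i\|_{L^\infty}, \|\theta_i\|_{L^\infty} \lesssim \sqrt N$, and $\|\nabla\theta_1\|,\|\nabla\boldsymbol u_2\| \leq \sqrt N$. Combining these uniform bounds with H\"older's inequality and the 2D Sobolev embedding $H^{1/2}\hookrightarrow L^4$ gives
\begin{equation*}
|\langle\boldsymbol w\cdot\nabla\boldsymbol u_2,\boldsymbol w\rangle| \leq C\sqrt N \,\|\Lambda^{1/2}\boldsymbol w\|^2, \quad
|\langle\boldsymbol w\cdot\nabla\theta_1,\eta\rangle| \leq C\sqrt N \,\|\Lambda^{1/2}\boldsymbol w\|\,\|\Lambda^{1/2}\eta\|, \quad
|\langle\eta\,\boldsymbol e_2,\boldsymbol w\rangle| \leq \|\eta\|\,\|\boldsymbol w\|,
\end{equation*}
and each right-hand factor is then converted into the dissipation through $\|\Lambda^{1/2}\boldsymbol w\| \leq \lambda_{m+1}^{-(\alpha-1/2)}\|\Lambda^\alpha\boldsymbol w\|$, $\|\Lambda^{1/2}\eta\| \leq \lambda_{m+1}^{-(\beta-1/2)}\|\Lambda^\beta\eta\|$ (plus $\|\boldsymbol w\|\leq \lambda_{m+1}^{-\alpha}\|\Lambda^\alpha\boldsymbol w\|$, $\|\eta\|\leq\lambda_{m+1}^{-\beta}\|\Lambda^\beta\eta\|$) together with Young's inequality. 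The worst-scaling estimate is the first display, because the momentum self-interaction has no $\beta$-dissipation to share; this is precisely where the exponent $\alpha-\tfrac12$ in~\eqref{e1} originates. After rearranging, the three numerator pieces $\kappa N^{1/2}$, $N$ and $1$ in the threshold~\eqref{e1} reflect, respectively, the $\boldsymbol w\cdot\nabla\boldsymbol u_2$ estimate (absorbed purely by $\nu$-dissipation), the coupled $\boldsymbol w\cdot\nabla\theta_1$ estimate (split between $\nu$ and $\kappa$ by Young), and the bare $\eta\,\boldsymbol e_2$ coupling (split between $\nu$ and $\kappa$, without any factor of $N$).

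Once~\eqref{e1} holds, the right-hand side is dominated by $\tfrac{\nu}{2}\|\Lambda^\alpha\boldsymbol w\|^2 + \tfrac{\kappa}{2}\|\Lambda^\beta\eta\|^2$, and a final application of Poincar\'e yields
\begin{equation*}
\frac{dy(t)}{dt} + \min\!\bigl\{\nu\,\lambda_{m+1}^{2\alpha},\;\kappa\,\lambda_{m+1}^{2\beta}\bigr\}\,y(t) \leq 0.
\end{equation*}
Since both trajectories lie in the compact attractor $\mathcal A$, $y$ stays uniformly bounded on all of $\mathbb R$; integrating this inequality from $s$ to $t$ and letting $s\to -\infty$ forces $y(t)=0$ for every $t\in\mathbb R$, which is the conclusion. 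I expect the main obstacle to be the quantitative bookkeeping in the third step: the subcritical constraint $\alpha,\beta\in(1/2,1)$ affords only the modest Poincar\'e gains $\lambda_{m+1}^{-(\alpha-1/2)}$ and $\lambda_{m+1}^{-(\beta-1/2)}$, so Young's inequality must be balanced carefully across all three nonlinear contributions in order for the single clean threshold recorded in~\eqref{e1} to absorb them simultaneously into the dissipation.
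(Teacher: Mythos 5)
Your proposal is correct and follows the same overall strategy as the paper: an $L^2$ energy identity for the difference $(\eta,\boldsymbol w)$, which lives entirely in the range of $Q_m$, cancellation of the pressure and of the $\boldsymbol u_i\cdot\nabla$ transport terms by incompressibility, absorption of the remaining trilinear terms into the dissipation via the attractor bound of Proposition~\ref{prop1} and the sharpened Poincar\'e inequality on $Q_m$, and finally backward integration of the resulting decay inequality using boundedness on $\mathcal A$. The one genuine difference is in the product estimates: the paper measures $\boldsymbol w\cdot\nabla\boldsymbol u_2$ and $\boldsymbol w\cdot\nabla\theta_1$ in $H^{-\alpha}$ and $H^{-\beta}$ and invokes the Kato--Ponce inequality (Lemma~\ref{lem2.4}) together with the embeddings of Lemma~\ref{lemma2.0}, obtaining the bound $C\|\Lambda^{1/2}Q_m\boldsymbol w\|\,\|\Lambda^{3/2-\alpha}\boldsymbol u_2\|\,\|\Lambda^{\alpha}Q_m\boldsymbol w\|$ and hence a single factor $\lambda_{m+1}^{1/2-\alpha}$, whereas you use the elementary H\"older splitting $L^4\cdot L^2\cdot L^4$ with $H^{1/2}\hookrightarrow L^4$ and put the full gradient on the attractor solution, which yields two Poincar\'e factors, i.e.\ $\lambda_{m+1}^{1-2\alpha}$. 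Your route is more elementary and in fact gives a slightly stronger gain in $\lambda_{m+1}$; since $\lambda_{m+1}\geq 1$ the paper's condition \eqref{e1} (with its right-hand side at least $1$) implies your sufficient condition, so the stated threshold is recovered, though your exponents do not literally reproduce \eqref{e1} and you should note that your version of the requirement reads $\lambda_{m+1}^{2\alpha-1}\gtrsim(\kappa N^{1/2}+N+1)/(\kappa\nu)$ rather than the first power of $\lambda_{m+1}^{\alpha-1/2}$.
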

\begin{proof}
Let $(\theta_1, \boldsymbol u_1)$, $(\theta_2, \boldsymbol u_2)$ be the solutions on the attractor $\mathcal{A}$ and $(\eta, \boldsymbol w) = (\theta_1-\theta_2, \boldsymbol u_1 - \boldsymbol u_2)$. Then $(\eta, \boldsymbol w)$ satisfies:
\begin{equation}\begin{cases}\label{eq2.1.1}
\partial_t\boldsymbol w + \boldsymbol u_1\cdot \nabla \boldsymbol w + \boldsymbol w \cdot \nabla \boldsymbol u_2 + \nu(-\Delta)^\alpha \boldsymbol w= -\nabla(\pi_1-\pi_2)+\eta \boldsymbol e_2, \\
\partial_t \eta +\boldsymbol w\cdot \nabla \theta_1+ \boldsymbol u_2 \cdot \nabla \eta + \kappa(-\Delta)^{\beta}\eta = 0.
\end{cases}
\end{equation}
We now take the inner product of the equation \eqref{eq2.1.1}$_1$, \eqref{eq2.1.1}$_2$ with $Q_m \boldsymbol w$, $Q_m \eta$ in $L^2$ respectively, where $Q_m = I - P_m$. Since $P_m(\eta, \boldsymbol w) = 0$ and by integration by parts, we obtain
\[
\aimininner{\boldsymbol u_1\cdot \nabla \boldsymbol w}{Q_m \boldsymbol w}= \aimininner{\boldsymbol u_1\cdot \nabla P_m\boldsymbol w}{Q_m \boldsymbol w}+\aimininner{\boldsymbol u_1\cdot \nabla Q_m\boldsymbol w}{Q_m \boldsymbol w}= 0,
\]
and 
\[
\aimininner{\boldsymbol u_2 \cdot \nabla \eta}{Q_m \eta}= \aimininner{\boldsymbol u_2 \cdot \nabla P_m\eta}{Q_m \eta}+ \aimininner{\boldsymbol u_2 \cdot \nabla Q_m\eta}{Q_m \eta}= 0.
\]
Then we find,
\begin{equation}\begin{cases}\label{eq5.2.2}
\frac{1}{2} \frac{\mathrm{d}}{\mathrm{d}t} \aiminnorm{Q_m\boldsymbol w}^2 + \nu \aiminnorm{\Lambda^{\alpha} Q_m \boldsymbol w}^2= - \aimininner{\boldsymbol w \cdot \nabla \boldsymbol u_2}{Q_m \boldsymbol w} + \aimininner{\eta \boldsymbol e_2}{Q_m \boldsymbol w}, \\
\frac{1}{2} \frac{\mathrm{d}}{\mathrm{d}t} \aiminnorm{Q_m \eta}^2 + \kappa \aiminnorm{\Lambda^{\beta} Q_m \eta}^2= -\aimininner{\boldsymbol w \cdot \nabla \theta_1}{Q_m \eta}.
\end{cases}\end{equation}

We now estimate the term $\aimininner{\boldsymbol w \cdot \nabla \boldsymbol u_2}{Q_m \boldsymbol w}$. Since $P_m \boldsymbol w = 0$, 
\begin{equation} \label{eq5.3}
\aimininner{\boldsymbol w \cdot \nabla \boldsymbol u_2}{Q_m \boldsymbol w} =
\aimininner{P_m\boldsymbol w \cdot \nabla \boldsymbol u_2}{Q_m \boldsymbol w}+
\aimininner{Q_m\boldsymbol w \cdot \nabla \boldsymbol u_2}{Q_m \boldsymbol w} = \aimininner{Q_m\boldsymbol w \cdot \nabla \boldsymbol u_2}{Q_m \boldsymbol w},
\end{equation}
and since $\boldsymbol u_2$ is divergence free, we have
\begin{equation} \label{eq5.4}
\begin{split}
|\aimininner{Q_m \boldsymbol w \cdot \nabla \boldsymbol u_2}{Q_m \boldsymbol w} | &=
|\aimininner{\Lambda^{-\alpha}(Q_m \boldsymbol w \cdot \nabla \boldsymbol u_2)}{\Lambda^{\alpha} Q_m \boldsymbol w} | \\
& \leq \aiminnorm{\Lambda^{-\alpha}(Q_m \boldsymbol w \cdot \nabla \boldsymbol u_2)}
\aiminnorm{\Lambda^{\alpha} Q_m \boldsymbol w} \\
& \leq C\aiminnorm{\Lambda^{1-\alpha}(Q_m \boldsymbol w \otimes\boldsymbol u_2)}
\aiminnorm{\Lambda^{\alpha} Q_m \boldsymbol w}.
\end{split}
\end{equation}
Let $p_1 = 4/(3-2\alpha)$, $p_2 = 4/(2\alpha-1)$, 
$q_1 = 4$, and $q_2 = 4$ such that $1/p_1+1/p_2=1/q_1+1/q_2=1/2$. Applying Lemma~\ref{lem2.4} and the Sobolev inequality in Lemma~\ref{lemma2.0}, we find
\begin{equation} \label{eq5.5}
\begin{split}
\aiminnorm{\Lambda^{1-\alpha}(Q_m \boldsymbol w \otimes\boldsymbol u_2)}
&\leq C(\aiminnorm{\Lambda^{1-\alpha}Q_m \boldsymbol w}_{L^{p_1}}
\aiminnorm{\boldsymbol u_2}_{L^{p_2}}+
\aiminnorm{\Lambda^{1-\alpha}\boldsymbol u_2}_{L^{q_1}}
\aiminnorm{Q_m \boldsymbol w}_{L^{q_2}}) \\
& \leq C\aiminnorm{\Lambda^{\frac{1}{2}}Q_m \boldsymbol w}
\aiminnorm{\Lambda^{\frac{3}{2}-\alpha} \boldsymbol u_2}.
\end{split}
\end{equation}
Applying Poincar\'e's inequality and using \eqref{eqnn1}, we find
\[
\aiminnorm{\Lambda^{\frac{3}{2}-\alpha}\boldsymbol u_2}^2
\leq \aiminnorm{\Lambda^{2\alpha} \boldsymbol u_2}^2
\leq C\left(\frac{(1+\kappa)^2e^{2\nu}}{\kappa^3\nu^2}\right)\aiminnorm{\Lambda^{\beta}f}^2e^{2M}, 
\]
and
\[
\aiminnorm{\Lambda^{\frac{1}{2}}Q_m \boldsymbol w}^2
\leq \lambda_{m+1}^{1-2\alpha} \aiminnorm{\Lambda^{\alpha}Q_m\boldsymbol w}^2.
\]
Hence, we deduce that
\begin{equation} \label{eq5.6}
\begin{split}
|\aimininner{Q_m \boldsymbol w \cdot \nabla \boldsymbol u_2}{Q_m \boldsymbol w} | 
&\leq C\aiminnorm{\Lambda^{\frac{1}{2}}Q_m \boldsymbol w}
\aiminnorm{\Lambda^{\frac{3}{2}-\alpha} \boldsymbol u_2}\aiminnorm{\Lambda^{\alpha} Q_m \boldsymbol w} \\
& \leq C\left(\frac{(1+\kappa)^2e^{2\nu}}{\kappa^3\nu^2}\aiminnorm{\Lambda^{\beta}f}^2e^{2M}\right)^{\frac{1}{2}} \lambda_{m+1}^{\frac{1}{2}-\alpha}\aiminnorm{\Lambda^{\alpha} Q_m \boldsymbol w}^2.
\end{split}
\end{equation}
Next, we estimate $\aimininner{\boldsymbol w \cdot \nabla \theta_1}{Q_m \eta}$. Since $P_m \boldsymbol w= 0$, we have
\[
\aimininner{\boldsymbol w \cdot \nabla \theta_1}{Q_m \eta} =
\aimininner{P_m\boldsymbol w \cdot \nabla \theta_1}{Q_m \eta}+
\aimininner{Q_m\boldsymbol w \cdot \nabla \theta_1}{Q_m \eta} =
\aimininner{Q_m\boldsymbol w \cdot \nabla \theta_1}{Q_m \eta}
\]
Similarly to \eqref{eq5.4}, since $\boldsymbol w$ is divergence free, we have
\begin{equation} \label{eq5.8}
|\aimininner{Q_m \boldsymbol w \cdot \nabla \theta_1}{Q_m \eta} | 
\leq \aiminnorm{\Lambda^{1-\beta}(Q_m \boldsymbol w \cdot \theta_1)}
\aiminnorm{\Lambda^{\beta} Q_m \eta}.
\end{equation}
Let $p_1 = 4/(3-2\beta)$, $p_2 = 4/(2\beta-1)$, 
$q_1 = 4$ and $q_2 = 4$ such that $1/p_1+1/p_2=1/q_1+1/q_2=1/2$, we have 
\begin{equation} \label{eq5.9}
\begin{split}
\aiminnorm{\Lambda^{1-\beta}(Q_m \boldsymbol w \cdot \theta_1)}
&\leq  C(\aiminnorm{\Lambda^{1-\beta}Q_m \boldsymbol w}_{L^{p_1}}
\aiminnorm{\theta_1}_{L^{p_2}}+
\aiminnorm{\Lambda^{1-\beta}\theta_1}_{L^{q_1}}
\aiminnorm{Q_m \boldsymbol w}_{L^{q_2}}) \\
& \leq C\aiminnorm{\Lambda^{\frac{1}{2}}Q_m \boldsymbol w}
\aiminnorm{\Lambda^{\frac{3}{2}-\beta} \theta_1}.
\end{split} 
\end{equation}
Applying Poincar\'e's inequality and \eqref{eqnn1}, we find
\[
\aiminnorm{\Lambda^{\frac{3}{2}-\beta}\theta_1}^2 \leq
\aiminnorm{\Lambda^{2\beta}\theta_1}^2
\leq C\left(\frac{(1+\kappa)^2e^{2\nu}}{\kappa^3\nu^2}\right)\aiminnorm{\Lambda^{\beta}f}^2e^{2M}.
\]
Hence, by the Cauchy-Schwarz inequality, we deduce
\begin{equation} \label{eq5.10}
\begin{split}
|\aimininner{Q_m \boldsymbol w \cdot \nabla \theta_1}{Q_m \eta} | 
&\leq C\aiminnorm{\Lambda^{\frac{1}{2}}Q_m \boldsymbol w}
\aiminnorm{\Lambda^{\frac{3}{2}-\beta} \theta_1}\aiminnorm{\Lambda^{\beta} Q_m \eta} \\
&\leq \frac{C}{\kappa}\aiminnorm{\Lambda^{\frac{1}{2}}Q_m \boldsymbol w}^2
\aiminnorm{\Lambda^{\frac{3}{2}-\beta} \theta_1}^2+ \frac{\kappa}{4}\aiminnorm{\Lambda^{\beta} Q_m \eta}^2 \\
& \leq C\left(\frac{(1+\kappa)^2e^{2\nu}}{\kappa^4\nu^2}\right)\aiminnorm{\Lambda^{\beta}f}^2e^{2M} 
\lambda_{m+1}^{1-2\alpha}\aiminnorm{\Lambda^{\alpha} Q_m \boldsymbol w}^2+
\frac{\kappa}{4}\aiminnorm{\Lambda^{\beta} Q_m \eta}^2.
\end{split}
\end{equation}
Finally, we estimate the term $\aimininner{\eta \boldsymbol e_2}{Q_m \boldsymbol w}$. Since $P_m \eta = 0$,
then 
\[
\aimininner{\eta \boldsymbol e_2}{Q_m \boldsymbol w} = \aimininner{P_m\eta \boldsymbol e_2}{Q_m \boldsymbol w}+\aimininner{Q_m\eta \boldsymbol e_2}{Q_m \boldsymbol w} =\aimininner{Q_m\eta \boldsymbol e_2}{Q_m \boldsymbol w},
\] 
and
\begin{equation} \nonumber
\begin{split}
|\aimininner{Q_m\eta \boldsymbol e_2}{Q_m \boldsymbol w}| 
& = |\aimininner{\Lambda^{\beta}Q_m\eta \boldsymbol e_2}{\Lambda^{-\beta}Q_m \boldsymbol w}|  
\leq \aiminnorm{\Lambda^{\beta}Q_m\eta}\aiminnorm{\Lambda^{-\beta}Q_m\boldsymbol w}  \\
&\leq \frac{\kappa}{4}\aiminnorm{\Lambda^{\beta} Q_m\eta}^2+ \frac{1}{\kappa}\aiminnorm{\Lambda^{-\beta}Q_m\boldsymbol w}^2 \\
&\leq \frac{\kappa}{4}\aiminnorm{\Lambda^{\beta} Q_m\eta}^2+ \frac{1}{\kappa}\lambda_{m+1}^{-2(\alpha+\beta)}\aiminnorm{\Lambda^{\alpha} Q_m\boldsymbol w}^2.
\end{split}
\end{equation}
Let us denote 
\begin{equation} \label{N}
N:= \frac{(1+\kappa)^2e^{2\nu}}{\kappa^3\nu^2}\aiminnorm{\Lambda^{\beta}f}^2e^{2M}.
\end{equation}
Therefore, we arrive the differential inequality
\begin{equation}\begin{cases}\label{eq5.2.2}
\frac{1}{2} \frac{\mathrm{d}}{\mathrm{d}t} \aiminnorm{Q_m\boldsymbol w}^2 + \nu \aiminnorm{\Lambda^{\alpha} Q_m \boldsymbol w}^2 \leq (CN^{\frac{1}{2}} \lambda_{m+1}^{\frac{1}{2}-\alpha}+\frac{1}{\kappa}\lambda_{m+1}^{-2(\alpha+\beta)})\aiminnorm{\Lambda^{\alpha} Q_m \boldsymbol w}^2+\frac{\kappa}{4}\aiminnorm{\Lambda^{\beta} Q_m\eta}^2, \\
\frac{1}{2} \frac{\mathrm{d}}{\mathrm{d}t} \aiminnorm{Q_m \eta}^2 + \kappa \aiminnorm{\Lambda^{\beta} Q_m \eta}^2 \leq
CN\frac{1}{\kappa} \lambda_{m+1}^{1-2\alpha} \aiminnorm{\Lambda^{\alpha} Q_m \boldsymbol w}^2+
\frac{\kappa}{4}\aiminnorm{\Lambda^{\beta} Q_m \eta}^2.
\end{cases}\end{equation}
Summing the above two differential inequalities, we obtain,
\begin{equation}\nonumber
\begin{split}
&\frac{1}{2}\frac{\mathrm{d}}{\mathrm{d}t} (\aiminnorm{Q_m\boldsymbol w}^2 
+ \aiminnorm{Q_m \eta}^2)+ \frac{\kappa}{2}\aiminnorm{\Lambda^{\beta} Q_m \eta}^2 \\
+ & (\nu-CN^{\frac{1}{2}} \lambda_{m+1}^{\frac{1}{2}-\alpha}-\frac{1}{\kappa}\lambda_{m+1}^{-2(\alpha+\beta)}-CN\frac{1}{\kappa} \lambda_{m+1}^{1-2\alpha} )\aiminnorm{\Lambda^{\alpha} Q_m \boldsymbol w}^2 \leq 0.
\end{split}
\end{equation}
Since $\lambda_{m+1} > 1$, then $\lambda_{m+1}^{-2(\alpha+\beta)} <\lambda_{m+1}^{1-2\alpha}$. Hence,
\begin{equation}\label{eq5.12.1}
\frac{1}{2}\frac{\mathrm{d}}{\mathrm{d}t} (\aiminnorm{Q_m\boldsymbol w}^2 
+ \aiminnorm{Q_m \eta}^2)+ \frac{\kappa}{2}\aiminnorm{\Lambda^{\beta} Q_m \eta}^2 
+ \left(\nu-\lambda_{m+1}^{\frac{1}{2}-\alpha}(CN^{\frac{1}{2}}+\frac{1}{\kappa}+CN\frac{1}{\kappa})\right)\aiminnorm{\Lambda^{\alpha} Q_m \boldsymbol w}^2 \leq 0.
\end{equation}
Under the conditions \eqref{e1}, \eqref{eq5.12.1} implies 
\begin{equation}\label{eq5.12}
\frac{\mathrm{d}}{\mathrm{d}t} (\aiminnorm{Q_m\boldsymbol w}^2 
+ \aiminnorm{Q_m \eta}^2) 
+ \nu\aiminnorm{\Lambda^{\alpha} Q_m \boldsymbol w}^2 
+ \kappa\aiminnorm{\Lambda^{\beta} Q_m \eta}^2 \leq 0.
\end{equation}
Hence,
\begin{equation}\label{eq5.13}
\frac{\mathrm{d}}{\mathrm{d}t} (\aiminnorm{Q_m\boldsymbol w}^2 
+ \aiminnorm{Q_m \eta}^2) 
+ \sigma(\aiminnorm{\Lambda^{\alpha} Q_m \boldsymbol w}^2 
+ \aiminnorm{\Lambda^{\beta} Q_m \eta}^2) \leq 0,
\end{equation}
where $\sigma = \mathrm{min} \{\kappa, \nu\}$. Now, integrating \eqref{eq5.13} from $t_0$ to $t$, we have
\begin{equation}\label{eq5.14}
\aiminnorm{Q_m\boldsymbol w(t)}^2 
+ \aiminnorm{Q_m \eta(t)}^2 
\leq (\aiminnorm{Q_m \boldsymbol w(t_0)}^2 
+ \aiminnorm{Q_m \eta(t_0)}^2)e^{r(t_0-t)}.
\end{equation}
Thus, it provides $Q_m\boldsymbol w(t) = Q_m\eta(t) = 0$ for all $t \in \mathbb{R}$, by taking $t_0 \rightarrow -\infty$. We thus finish the proof of Theorem~\ref{thm1}.
\end{proof} 
\begin{remark}
In \cite{FMRT01}, it was proved that for $m \rightarrow \infty$, we have $\lambda_m \sim c\lambda_1^{\frac{1}{2}} m^{\frac{1}{2}}$, where $c$ is a nondimensional constant. We can conclude that \eqref{e1} provides the number of determining modes $m$, such that
\[
\begin{split}
m &\geq C \frac{(N^{\frac{1}{2}}+\frac{1}{2}\kappa)^2+1-\frac{1}{4}\kappa^2}{\kappa\nu}  \\
& \geq C\frac{(\frac{1}{\sqrt{\kappa}}\aiminnorm{\Lambda^{\beta} f}e^M+\frac{1}{2}\kappa)^2}{\kappa\nu}+C\frac{1-\frac{1}{4}\kappa^2}{\kappa\nu},
\end{split}
\]
where $N, M$ are defined in \eqref{N} and \eqref{e111}, respectively. 
\end{remark}

\section*{Acknowledgments}
The authors would like to thank Prof. M. Jolly for interesting discussions on this work.
This work was partially supported by the National Science Foundation under the grant NSF DMS-1418911.3, and by the Research Fund of Indiana University.

\bibliographystyle{amsalpha}
\providecommand{\bysame}{\leavevmode\hbox to3em{\hrulefill}\thinspace}
\providecommand{\MR}{\relax\ifhmode\unskip\space\fi MR }
\providecommand{\MRhref}[2]{%
  \href{http://www.ams.org/mathscinet-getitem?mr=#1}{#2}
}
\providecommand{\href}[2]{#2}

\end{document}